\theoremstyle{plain}
\newtheorem{theorem}{Theorem}
\newtheorem{proposition}[theorem]{Proposition}
\newtheorem{corollary}[theorem]{Corollary}
\newtheorem{lemma}[theorem]{Lemma}
\theoremstyle{definition}
\newtheorem{definition}[theorem]{Definition}
\newtheorem{remark}[theorem]{Remark}
\newtheorem{example}[theorem]{Example}
\def\col{\mathcal}
\title{Tracking before detection using partial orders and optimization}
\begin{document}

%
\author{Michael~Robinson$^{(1)}$~\IEEEmembership{Member,~IEEE}, \and Michael~Stein$^{(2)}$~\IEEEmembership{Member,~IEEE}, \and Henry~S.~Owen$^{(3)}$~\IEEEmembership{Member,~IEEE}
\thanks{(1) Mathematics and Statistics, American University, Washington, DC, USA email: michaelr@american.edu
  (2) Tesseract Analytics, LLC
  (3) H.S. Owen, LLC}}

\markboth{}{Tracking before detection using partially ordered sets}

\maketitle

\begin{abstract}
This article addresses the problem of multi-object tracking by using a non-deterministic model of target behaviors with hard constraints.  To capture the evolution of target features as well as their locations, we permit objects to lie in a general topological target configuration space, rather than a Euclidean space.  We obtain tracker performance bounds based on sample rates, and derive a flexible, agnostic tracking algorithm.  We demonstrate our algorithm on two scenarios involving laboratory and field data.
\end{abstract}

\begin{IEEEkeywords}
partially ordered set, multi-object tracker, set-valued function, network flow
\end{IEEEkeywords}

\section{Introduction}


This article addresses the problem of tracking multiple objects inhabiting a general configuration space (not just Euclidean space).  We assume that the objects satisfy set-valued dynamical laws that incompletely constrain their behavior.  Set-valued dynamical systems do not have a definite ``internal state,'' but rather have a \emph{set} of indistinguishable possible states.  Set-valued dynamical systems can be used to model objects that make indepedent decisions that are fundamentally impossible to anticipate beforehand.  We desire performance bounds on tracker performance (specifically the number of track drops, the distribution of track lengths, and a measure of object-track association accuracy) that are determined by the available data rates and data quality.


Modern tracking algorithms typically make use of probabilistic state spaces to model the motion of targets in the environment and to model the acquisition of information about these targets.  Without fairly strong features, estimating the parameters describing targets and sensors is difficult.  This becomes particularly acute in highly cluttered or observation-poor environments, because the assumptions of probabilistic models (especially independence) are hard to ensure in practice.  This article addresses the situation of multi-target tracking using coarse, topological constraints.  The algorithms described in this article attempt to minimally constrain target motion by observations.

This article leverages loose kinematic-like constraints through set-valued functions on targets on an abstract topological space to represent both physical location as well as target attributes.  These constraints naturally lead to an algorithmic approach based on the continuity of a target's path through the topological space.  The general framework proposed in this article permits two kinds of constraints to be mixed in a flexible way, namely
\begin{enumerate}
  \item Constraints on target paths based on attributes (eg. kinematics)
  \item Constraints on target attributes based on target paths (eg. aspect angles or textural features)
\end{enumerate}
Observations of the set of targets are then threaded together via the constraints to produce a representation of \emph{possible} target locations---without specifically committing to any inferences about the behavior of a specific target until many more observations have been made.  This allows for a more global optimization of track quality.

\section{Historical discussion}


Multi-object (or multi-target) tracking has been studied extensively for many years.  There are a number of good surveys, such as \cite{Vo_2015,Mahler_2014,Luo_2014,Yang_2011,Pulford_2005} (and many others) that cover the basic probabilistic and deterministic methods.  Various systematic experimental campaigns have also been described \cite{leal2015motchallenge,Solera_2015}.  Other authors show that fusing detections across sensors \cite{Bailer_2014,Mahler_2013,smith2006approaches,Newman_2013,hall2004mathematical} yields better coverage and performance.  Considerable effort is expended developing robust features, though usually the metric for which selecting features is pragmatic rather than theoretical.

The theory of set-valued functions is a natural foundation for tracking algorithms, since they retain multiple, competing hypotheses about object trajectories \cite{Streit_1995}.  Set-valued functions are a common tool in the dynamical systems literature, where they are used in the theory of differential inclusions \cite{Aubin_2012}, control theory \cite{goebel2009hybrid}, economics \cite{cherene2012set}, and robotics \cite{martinez2007motion,erdmann1988exploration}.  

Several authors (most notably \cite{Luo_2014,Pulford_2005}) provide exhaustive taxonomies of tracker algorithms which include probabilistic and deterministic trackers.  Probabilistic methods based on optimal Bayesian updates \cite{Vo_2013,DeGroot_2004} (or their many variations (for instance \cite{hamid2015joint, Deming_2009,Efe_2004}) prefer some \emph{detections} over others.  We note that our approach instead weights \emph{trajectories}.  Although Kalman filters cannot handle hard constraints, particle filters can be tailored to handle hard constraints \cite{Elfring2021ParticleFA}.

Trackers based on optimal network flows tend to be extremely effective in video object tracking \cite{Butt_2013,Berclaz_2011,Pirsiavash_2011}.  Network flow trackers are optimal when target behaviors are probabilistic \cite{Zhang_2008,Perera_2006}, but no sampling rate bounds appear to be available.  There are a number of variations of the basic optimal network flow algorithm, such as providing local updates to the flow once solved \cite{Milan_2015}, using tracklets as detections \cite {wang2014tracklet}, or $K$-shortest paths, sparsely enriched with target feature information \cite{Ben_2014}.  

Our algorithm is a kind of a ``track-before-detect'' algorithm, though not as is usually understood.  While the algorithm uses potential detections as input, those detections that cannot cohere into tracks are ultimately rejected.  Although successful \cite{Yan_2012}, track-before-detect algorithms are not as popular as other approaches that rely on hard detections. Track before detection algorithms usually predict where an object will be after it is observed.  This is generally done geometrically \cite{Possegger_2014}, and often probabilistically \cite{choi2013general}.  In contrast, although our algorithm works with detections, it treats them as \emph{tentative}, and only uses those that cohere into tracks.

Our algorithmic approach is inspired by the Ford-Fulkerson algorithm, but optimized over vertices in addition to edges.  Similar algorithmic approaches have been discussed extensively in the mathematical literature for application to graph and poset-theoretic problems \cite{Bogart_2013}.  Shortly after the Ford-Fulkerson algorithm was published, Norman and Rabin \cite{Norman_1959} discussed the relationship between minimal matching---which our algorithm computes---and maximal covers.  This does not address the problem of \emph{finding} either a matching or a cover, since they worked under the hypotheses of the Ford-Fulkerson algorithm.  Cameron \cite{Cameron_1985} gives a direct algorithmic approach for solving minimal cover problems via linear programming.

\section{Contributions}


In high-resolution video tracking, rich textural features can be derived since there are many pixels on each object.  This helps a netflow tracker disambiguate the path of an object through the network of detections.  The probabilistic proof of optimality of the netflow tracker in \cite{Zhang_2008} relies on a Markov chain to model target motion which further disambiguates an object's path.  This assumes that there is a definite---but uncertain---hidden state that drives the behavior of a target through a fixed dynamical law.  When objects are too small to have texture and can make independent (possibly evasive) actions, we argue that \emph{what is crucial is not the likelihood of a target path, but rather whether it is physically possible and how nearby target paths compete for detections.}

Our approach is to supply hard, but loose, physical constraints to derive a discrete optimization problem, rather than tighter soft probabilistic constraints.  We prove performance guarantees given these constraints, and derive sampling requirements for correct performance.  We will permit objects to lie in an abstract topological target configuration space, rather than a Euclidean space.  We demonstrate an algorithm based on this theory using two feature-poor scenarios:
\begin{enumerate}
  \item 3d multi-target air traffic tracking with multiple sensors that have overlapping domains and varying altitude resolution, and a
  \item Single-sensor range-only sonar.
\end{enumerate}

\section{Outline of the paper}

Because the practical performance of our algorithm depends strongly on the underlying theory, we begin in Section \ref{sec:constraint_functions} with a careful discussion of the general theory of constraint functions.
In Section \ref{sec:sample_rate}, we prove estimates of performance using our approach under the assumption of a probabilistic behavioral model for objects.  It should be emphasized that the probabilistic model is not essential to construct constraint functions, though in practice such a model is useful for training purposes.
In Section \ref{sec:process}, we outline the practical process for using our method.
In Section \ref{sec:results}, we discuss results on our two candidate datasets.
Finally, we conclude in Section \ref{sec:conclusions} with some final remarks.

Our motivation for tracking multiple objects is driven by a need for offline, batch processing.
As such, we take a two-stage, data-driven approach: (1) train the model of behavioral constraints and then (2) run the tracking algorithm using the modeled constraints.
For training, we start with a the observable state space and estimate a \emph{transition probability function} from the data (Definition \ref{def:transition_probability}).
By thresholding this transition probability function, we learn a \emph{constraint function} globally (theoretically: Definition \ref{df:constraint_function}; practically: Section \ref{sec:designing_constraint}).
This learning is run as a constrained optimization over certain classes of functions.
For instance, for kinematic motion, it is sensible to optimize over functions supported on cones emanating from each point in the state space with an axis centered on the velocity vector.
We additionally estimate the amount of variability associated to a given detection.

Once we have estimated the constraint function, we deploy it to construct a \emph{tracklet poset} (Definition \ref{df:tracklet_poset}).
Candidate tracks for objects within our data correspond to maximal chains within the tracklet poset (Corollary \ref{cor:time_chains} and Proposition \ref{prop:search_region}).
We select among those candidate tracks using a weight function that is designed based upon the expected kinematics (Algorithm \ref{alg:posettrack} and Section \ref{sec:weighting}).

\section{Constraints and the tracklet poset}
\label{sec:constraint_functions}

Rather than beginning with a notion of targets with definite locations (usually in a Euclidean space) at definite times, we will represent the set of possible \emph{events} to be observed as the set of points in a topological space $E$.  Given that an event occurs---perhaps a target is at a given location and time---later events may be anticipated.  We encode this inference process on events and observations by the use of a particular kind of \emph{set-valued function}.

The set valued function we have in mind is one that takes events to observations, namely a function from a topological space $E$ to the power set $2^E$.

\begin{definition}
  \label{df:constraint_function}
A set-valued function $C:E \to 2^E$ is a \emph{constraint function} if $x \in C(x)$ for all $x\in E$ and it is a \emph{lower semicontinuous set-valued map}: for every open set $U \subseteq E$, the set $\{x \in E : C(x) \cap U \not= \emptyset\}$ is open.  We often will write 
\begin{equation*}
C(U) = \bigcup_{x \in U} C(x)
\end{equation*}
where $U$ is an open set.   We will often call open sets in $E$ \emph{observations} to emphasize their role in the algorithms that follow.
\end{definition}

A constraint function $C$ represents a set-valued discrete-time dynamical system, relating sets of possible points through a particular dynamical relationship.  Given that an event $x\in E$ has occurred, all possible subsequent events lie in $C(x)$.  We caution the reader that although the intuition is that $C(x)$ consists of events occurring no earlier than $x$, this is not required.  A constraint function can be ``forensic'' and limit possible past events as well.

In the case of tracking, each event $x \in E$ (corresponding to a singleton set $\{x\}$) corresponds to a detection,
whereas an \emph{obsercation} is an open set.
In practice, it is not necessary to belabor this difference, since detections can be interpreted as specifying the centroid of an ellipsoidal open set, whose dimensions are given by the typical measurement errors in each state variable.
We will model this by saying that the true value of the object being tracked lies somewhere within an open neighborhood $U$ that contains $x$.
The constraint function means that the rest of that object's future history will be contained in the open set $C(U)$.

A constraint function $C$ defines a reflexive relation $\prec$ on subsets of $E$: $A \prec B$ if for every open $U$ containing $A$, it follows that $C(U) \cap B \not= \emptyset$.

\begin{definition}
  \label{df:tracklet_poset}
Given a collection $\col{U}$ of subsets of $E$ and a constraint function $C$, the \emph{constraint graph} $TG_{C}(\col{U})$ is the directed graph\footnote{This graph might have cycles if $C$ applies constraints backwards in time.} whose vertices are elements of $\col{U}$ and whose edges are given by those ordered pairs $(A,B)$ for which $A \prec B$.  The \emph{tracklet poset} $TP_{C}(\col{U})$ is given by the equivalence classes of the transitive closure of $TG_{C}(\col{U})$.
\end{definition}

\begin{remark}
  \label{rem:transitive}
  The transitive closure of $\prec$ is typically a preorder: if $A$ and $B$ are distinct overlapping sets, then $A \prec B$ and $B \prec A$ even though $A \not= B$.  Tracklet posets are most useful when built from collections $\col{U}$ of disjoint open sets.  Overlapping open sets in $\col{U}$ will automatically be placed in the same equivalence class.  For truly hard observations, this is the best that one can do.  If there are not hard bounds on the uncertainties of observations, one can substitute smaller, disjoint observations via Proposition \ref{prop:refinement}.  Although it is sometimes useful to consider a tracklet preorder\footnote{A transitive, but not necessarily antisymmetric relation}, it is harder to guarantee correct performance.  Specifically, Proposition \ref{prop:single_target}, Corollary \ref{cor:time_chains}, and Proposition \ref{prop:search_region} all fail to hold, because it is essentially impossible to treat an observation as a single, atomic event.
\end{remark}

\begin{definition}
  \label{def:event_space}
The \emph{event space} generated by a constraint function $C$ on a topological space $E$ is the partial order given by $(TP_{C}(E),\prec)$---elements of $E$ are to be interpreted as singleton sets---which we will usually write $(E,\le)$.
\end{definition}

We will always think of the $\le$ relation as being a temporal order, in which $x \le y$ implies that $x$ comes before $y$.
But the $\le$ relation implies more than that, namely that a single target appearing at $y$ might be the same target as one appearing at $x$.

\begin{remark}
It is often the case that the transitive closure of $\prec$ on singletons will automatically be a partial order, so we will usually abuse notation and treat elements of $E$ as being singletons rather than $\prec$-equivalence classes of singletons.
\end{remark}

In an event space $(E,\le)$ associated to a constraint function $C$ sets of the form
\begin{equation*}
\uparrow x = \{y \in E : x \le y\}
\end{equation*}
will be taken to represent the set of events that could logically follow $x$.  Similarly, we define the \emph{upset} of an observation $U$ to be
\begin{equation}
\uparrow U = \bigcup_{x \in U} \uparrow x.
\end{equation}

\begin{remark}
Although $\uparrow x$ is a closed set in most of the examples in this article, this is not guaranteed by the semicontinuity of the constraint function.
\end{remark}

\begin{example}
  \label{eg:position_space}
$E=X\times \mathbb{R}$ whose components are be interpreted as position and time forms an event space in which $(x,s) \le (y,t)$ if $s \le t$.  $X$ could represent position, position and momentum, or other phase spaces for a dynamical system.  (This idea is taken farther in Definition \ref{df:temporal_event_space}.)
\end{example}

\begin{proposition}
\label{prop:refinement}
If $\col{U}$ is a refinement of $\col{V}$ (each $U \in \col{U}$ is a subset of some $V \in \col{V}$) then this induces an order preserving function $TP_C(\col{U}) \to TP_C(\col{V})$.
\end{proposition}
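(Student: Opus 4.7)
The plan is to build the map in two stages: first define a map of the underlying collections by a choice, then lift it to the constraint graphs and descend to equivalence classes.

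First, I would use the hypothesis that $\mathcal{U}$ refines $\mathcal{V}$ to pick, for each $U \in \mathcal{U}$, some $\phi(U) \in \mathcal{V}$ with $U \subseteq \phi(U)$. The key verification is that this $\phi$ is a graph homomorphism $TG_C(\mathcal{U}) \to TG_C(\mathcal{V})$: whenever $A \prec B$ in $\mathcal{U}$, I want $\phi(A) \prec \phi(B)$ in $\mathcal{V}$. Let $W$ be any open set containing $\phi(A)$. Since $A \subseteq \phi(A) \subseteq W$, the open set $W$ also contains $A$, so by definition of $A \prec B$ we get $C(W) \cap B \neq \emptyset$, and because $B \subseteq \phi(B)$ this gives $C(W) \cap \phi(B) \neq \emptyset$. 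Hence $\phi(A) \prec \phi(B)$, as required.

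Because $\phi$ preserves edges in the constraint graph, it automatically preserves the transitive closure: $A \preceq^* B$ in $TG_C(\mathcal{U})$ implies $\phi(A) \preceq^* \phi(B)$ in $TG_C(\mathcal{V})$. This passes to the quotient and yields an order preserving function $TP_C(\mathcal{U}) \to TP_C(\mathcal{V})$ on $\prec$-equivalence classes, provided the induced map is well defined on those classes. Well-definedness has two pieces. First, if $A$ and $A'$ lie in the same class of $TP_C(\mathcal{U})$, then the chain of relations witnessing their equivalence maps under $\phi$ to a corresponding chain in $\mathcal{V}$, so $\phi(A)$ and $\phi(A')$ lie in the same class.

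Second, the map should not depend on the arbitrary choices made when defining $\phi$. If $\phi'$ is another selection with $U \subseteq \phi'(U)$, then $\phi(U) \cap \phi'(U) \supseteq U$ is nonempty, so any $x \in U$ lies in both. Using $x \in C(x)$ (the reflexivity clause of Definition \ref{df:constraint_function}), we get $\phi(U) \prec \phi'(U)$ and $\phi'(U) \prec \phi(U)$ in $TG_C(\mathcal{V})$, so $\phi(U)$ and $\phi'(U)$ are identified in $TP_C(\mathcal{V})$ as noted in Remark \ref{rem:transitive}. Thus the induced map on tracklet posets is canonical, and the order preservation already established completes the proof. The only subtle point is this last well-definedness check, and it is exactly the phenomenon the remark warns about: overlapping sets are forced into a common equivalence class, which here is a feature rather than a bug.
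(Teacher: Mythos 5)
Your proof is correct and takes essentially the same route as the paper's: choose a selection $\phi:\mathcal{U}\to\mathcal{V}$ with $U \subseteq \phi(U)$, verify that $\phi$ preserves $\prec$, and descend to the quotient posets. If anything you are slightly more careful than the paper on two points---you check $\phi(A)\prec\phi(B)$ directly from the quantified definition of $\prec$ (where the paper uses the characterization $C(A)\cap B\neq\emptyset$, which tacitly assumes the sets are open), and you verify that the induced map is independent of the arbitrary choices in $\phi$, a well-definedness point the paper leaves implicit.
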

This induced function is a coarsening from more precise observations to less precise observations.  As noted in Remark \ref{rem:transitive}, this Proposition allows tracklet posets to be refined until observations are disjoint.
\begin{proof}
First, since $\col{U}$ is a refinement of $\col{V}$, there is a function $\phi: \col{U} \to \col{V}$ for which $A \subseteq \phi(A)$ for all $A\in\col{U}$.  What needs to be shown is that $\phi$ is well-defined and order preserving on tracklet posets.  That $\phi$ is well-defined is an immediate consequence of it being order preserving, so it suffices to show that $\phi$ is order preserving.  Suppose that $A \prec B$ for two sets in $\col{U}$.  This means that $B \cap C(A) \not= \emptyset$.  But then $B \subseteq \phi(B)$ and $A \subseteq \phi(A)$.  Since $C$ is a constraint function, the latter statement implies that $C(A) \subseteq C(\phi(A))$.  Thus $\phi(B) \cap C(\phi(A)) \not= \emptyset$, so $\phi(A) \prec \phi(B)$.
\end{proof}

Intuitively, a continuous function from an interval $I$ to $(E,\le)$ might not respect the $\le$ ordering.
Such a function cannot represent the trajectory of a physical object.
We need to distinguish between \emph{paths} (arbitrary continuous functions) and \emph{timelines} (those continuous functions that also respect the ordering).

\begin{definition}
A \emph{timeline} is an order preserving path $p:I \to (E,\le)$ from an interval $I\subseteq \mathbb{R}$ with the usual order. An \emph{observation of $p$} is an observation $U$ that intersects the image of $p$.
\end{definition}

\begin{example}
  \label{eg:velocity_constraint}
  Consider the event space $E=\mathbb{R}^2$ whose components consist of position and time.  If $V>0$ represents a maximum velocity, then the partial order can represent a \emph{forward velocity constraint} whose upsets are generated by sets of the form
  \begin{equation*}
    \uparrow (y,s) = \left\{(x,t) : 0\le \frac{|x-y|}{t-s} < V\right\}.
  \end{equation*}

The timelines for this event space consist of graphs of Lipschitz continuous functions $f:\mathbb{R}\to\mathbb{R}$ whose slope is between $-V$ and $V$.
\end{example}

\begin{example}
  If the event space contains position and velocity, partial orders can enforce the kinematic relationship between them.  For simplicity, consider $E=\mathbb{R}^3$ whose components represent position, velocity, and time.  Suppose that $Q>0$ is given, which represents maximum acceleration.  Define a partial order for which
  \begin{eqnarray*}
    \uparrow (y,w,s) &=&\{(x,v,t) : \text{if there is a path }p:[s,t]\to\mathbb{R}\\
    &&\text{ with } (p(t),p'(t),t)=(x,v,t), \\
    && (p(s),p'(s),s)=(y,w,s) \\
    &&\text{and }|p''(\tau)|< Q \text{ for all }\tau \in [s,t]\}.
  \end{eqnarray*}
  The set of timelines for this event space consists of paths with acceleration less than $Q$. 
\end{example}

According to Proposition \ref{prop:refinement} if $p$ is a timeline $\mathbb{R} \to E$, the composition of $p$ with the induced map $E \to TP_{C}(\col{U})$ is order preserving.  This leads to the following Corollary if the timeline is covered by a refinement of the observations.

\begin{corollary}
If $\col{U}$ is a cover of the image of a timeline (the set of values in the event space), then that timeline corresponds to a chain in $TP_{C}(\col{U})$.
\end{corollary}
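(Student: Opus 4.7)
The plan is to reduce the statement to an application of Proposition~\ref{prop:refinement}, combined with the observation that an order-preserving image of a chain is a chain.

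First I would use that $\mathcal{U}$ covers the image of $p$ to select, for each $t \in I$, some $U_t \in \mathcal{U}$ containing $p(t)$. The collection of singletons $\{\{p(t)\} : t \in I\}$ is then a refinement of $\mathcal{U}$, since each $\{p(t)\}$ is a subset of its chosen $U_t$. Proposition~\ref{prop:refinement} accordingly furnishes an order-preserving function $\phi : TP_C(\{\{p(t)\} : t \in I\}) \to TP_C(\mathcal{U})$ that sends the singleton $\{p(t)\}$ to $U_t$.

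Next I would verify that the singletons $\{p(t)\}$, indexed by $t \in I$, form a chain in the source tracklet poset. Since $I$ carries the usual total order and $p$ is a timeline (order-preserving by hypothesis), for any $s \le t$ in $I$ we have $p(s) \le p(t)$ in $(E,\le)$, which, under the singleton convention adopted just before Example~\ref{eg:position_space}, is precisely $\{p(s)\} \prec \{p(t)\}$. Any two parameters in $I$ are comparable, so the corresponding singletons are pairwise comparable under $\prec$. Applying the order-preserving map $\phi$ then carries this chain into a chain $\{U_t : t \in I\}$ in $TP_C(\mathcal{U})$, which is the required conclusion.

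The main obstacle is really bookkeeping rather than mathematics: distinct parameters $s \ne t$ may produce the same singleton, may be assigned the same cover element, or may land in the same $\prec$-equivalence class in $TP_C(\mathcal{U})$. None of these genuinely disrupts the argument, because equalities within a chain are permitted and because the transitive-closure quotient built into the definition of a tracklet poset absorbs such identifications. The only real delicacy is invoking the convention that identifies points of $E$ with their $\prec$-equivalence classes of singletons so that ``$p$ is order-preserving'' can be reread verbatim as the relation $\{p(s)\} \prec \{p(t)\}$ needed to feed into $\phi$.
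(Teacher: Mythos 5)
Your proposal is correct and follows essentially the same route as the paper, whose entire justification is the sentence preceding the corollary: apply Proposition~\ref{prop:refinement} to the refinement of $\col{U}$ by singletons along the timeline, and note that the order-preserving composite of $p$ with the induced map sends the totally ordered interval to a chain in $TP_{C}(\col{U})$. If anything, your write-up is slightly more careful than the paper's one-line argument, since you restrict the refinement to singletons on the image of $p$ (which is all the covering hypothesis provides) and explicitly handle the equivalence-class bookkeeping.
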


This corollary is a bit stringent.
Typically we will not be able to cover the image of a timeline with observations.  The next proposition indicates that this can be relaxed somewhat.

\begin{proposition}
  \label{prop:single_target}
Suppose that $\col{U}$ is a collection of observations in an event space $E$ for a constraint function $C$, that $p:\mathbb{R}^+\to E$ is a timeline for $(E,\le)$ and that $p(0)$ is contained in some $U\in\col{U}$.  Consider the function $P:\mathbb{R}^+ \to TP_{C}(\col{U})$ given by 
\begin{equation*}
P(t) = \begin{cases}
[U(t)]& \text{if there is a } U(t) \in \col{U}\text{ such that }p(t) \in U(t)\\
[U(t')]& \text{where }t'\text{ is the largest number less than }t\\&\text{ such that } p(t')\in U(t') \in \col{U},\\
\end{cases}
\end{equation*}
where we use the square brackets to refer to equivalence classes of observations.
If $P(s) \prec P(t)$, then $s \le t$.
\end{proposition}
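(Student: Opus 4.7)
I would argue by contrapositive. Assume $s > t$ and derive that either $P(s) \not\prec P(t)$ or the reflexive degenerate case $P(s) = P(t)$ holds. Let $s^* \le s$ and $t^* \le t$ be the effective observation times chosen by the definition of $P$, so that $p(s^*) \in U(s^*) \in \col{U}$ and $p(t^*) \in U(t^*) \in \col{U}$. Because every candidate $u \le t$ for $t^*$ is also a candidate for $s^*$, we have $s^* \ge t^*$; if additionally $s^* \le t$, then $s^*$ itself qualifies as a candidate for $t^*$, which forces $s^* = t^*$ and, via Remark \ref{rem:transitive} on overlapping observations, $P(s) = P(t)$ (the degenerate case). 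Otherwise $s^* > t \ge t^*$, giving the strict inequality $s^* > t^*$.

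In the strict subcase, $p$ being a timeline yields $p(t^*) \le p(s^*)$ in $(E,\le)$. I then lift this singleton-level inequality to the observation-level inequality $[U(t^*)] \prec [U(s^*)]$ in $TP_C(\col{U})$: when $\{p(t^*)\} \prec \{p(s^*)\}$ holds in a single step, the definition gives $p(s^*) \in C(V)$ for every open $V \ni p(t^*)$, and taking $V = U(t^*)$ combined with $p(s^*) \in U(s^*)$ yields $C(U(t^*)) \cap U(s^*) \neq \emptyset$, i.e.\ $U(t^*) \prec U(s^*)$ in the constraint graph, and hence $P(t) \prec P(s)$ in the tracklet poset.

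The tracklet poset is antisymmetric by construction (the quotient of the transitive preorder by mutual $\prec$), so combining the hypothesis $P(s) \prec P(t)$ with the just-derived $P(t) \prec P(s)$ forces $P(s) = P(t)$---the degenerate case again---closing the contrapositive.

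The main obstacle is the lifting step when $p(t^*) \le p(s^*)$ is witnessed by a chain of length greater than one in the event-space order. The intermediate singletons need not lie in $\bigcup \col{U}$, so a single application of lower semicontinuity does not immediately produce an observation-level chain. For the kinematic constraint functions of the paper's examples the event-space order is cone-generated and achieved in one $\prec$-step, so this is moot; in full generality one threads through auxiliary open neighborhoods of each intermediate point and then descends via Proposition \ref{prop:refinement}. A secondary subtlety is that the hypothesis $P(s) \prec P(t)$ does not exclude $P(s) = P(t)$---which can occur whenever $p$ enters no fresh observation between times $t$ and $s$---so the conclusion $s \le t$ must be read as allowing that reflexive identification.
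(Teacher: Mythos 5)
Your proof has a genuine gap at exactly the step you flag as ``the main obstacle,'' and the proposed repair does not work. Your argument runs in the opposite direction from the paper's: the paper consumes the hypothesis $P(s) \prec P(t)$ together with the timeline property to place $p(t)$ inside $\uparrow U(s)$, concluding $\uparrow p(t) \subseteq \uparrow p(s)$, i.e.\ $p(s) \le p(t)$, hence $s \le t$ --- it only ever transfers information \emph{downward} from the tracklet poset to the event-space order. Your contrapositive instead requires the \emph{upward} transfer: from $p(t^*) \le p(s^*)$ in $(E,\le)$ to $[U(t^*)] \prec [U(s^*)]$ in $TP_C(\col{U})$. That implication is false in general, not merely delicate. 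The order $\le$ on $E$ is the transitive closure of $\prec$ over all singletons of $E$, whereas the order on $TP_C(\col{U})$ closes transitively only through members of $\col{U}$; since constraint functions are not assumed transitive ($C(C(x)) \not\subseteq C(x)$ in general), a witnessing chain whose intermediate points avoid $\bigcup \col{U}$ yields no relation between the endpoint observations. Concretely: take $E = \mathbb{R}$ with $C(x) = [x, x+1)$, which is lower semicontinuous and satisfies $x \in C(x)$; the induced singleton relation is $x \prec y$ iff $x \le y \le x+1$, so the event-space order is the usual order on $\mathbb{R}$, and $p(t) = t$ is a timeline. With $U_0 = (-0.1, 0.1)$, $U_5 = (4.9, 5.1)$, and $\col{U} = \{U_0, U_5\}$, we have $p(0) \le p(5)$ with both points observed, yet $C(U_0) = (-0.1, 1.1)$ misses $U_5$, so $[U_0]$ and $[U_5]$ are incomparable in $TP_C(\col{U})$. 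Hence in your strict subcase $s^* > t^*$ you cannot derive $P(t) \prec P(s)$, the antisymmetry argument never fires, and the contrapositive does not close.

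The suggested fix via Proposition \ref{prop:refinement} begs the question: that proposition produces a map $TP_C(\col{A}) \to TP_C(\col{B})$ only when $\col{A}$ refines $\col{B}$, i.e.\ each member of $\col{A}$ is contained in some member of $\col{B}$. Adjoining auxiliary neighborhoods $V_i$ of the intermediate chain points does give a chain in $TP_C(\col{U} \cup \{V_i\})$, but $\col{U} \cup \{V_i\}$ refines $\col{U}$ only if each $V_i$ sits inside an existing observation --- precisely what fails when the intermediate points are unobserved, which is the only case in which you need the maneuver; and the example above shows the lifted relation can be outright false, so no argument can recover it. To repair the proof you must use the hypothesis $P(s) \prec P(t)$ directly, as the paper does, rather than manufacture the reverse relation. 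Two of your observations are nonetheless sound and match the paper: your overlapping-observation argument for the degenerate case $s^* = t^*$ is exactly the paper's well-definedness paragraph, and your closing caveat --- that reflexivity permits $P(s) \prec P(t)$ with $P(s) = P(t)$ and $s > t$, so the statement must be read strictly --- identifies a real imprecision that the paper's own proof also silently glosses.
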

$P(t)$ specifies the most recent equivalence class of observations of the timeline up to time $t$.
\begin{proof}
$P$ is well defined because if $U'(t) \in \col{U}$ also contains $p(t)$, then $C(U(t)) \cap U'(t)$ and $C(U'(t))\cap U(t)$ both contain at least $U(t) \cap U'(t)$ by the definition of a constraint function.  But the intersection $U(t) \cap U'(t)$ is nonempty because it contains $p(t)$.  Hence $U \prec U'$ and $U' \prec U$, so we conclude $[U]=[U']$.

Now suppose $P(s) \prec P(t)$.  Since $p$ is a timeline, we have that for any open set $U(s) \in P(s)$, the set $\uparrow U(s)$ contains $p(t)$ and therefore intersects any open set containing $p(t)$.  Thus $\uparrow p(t) \subseteq \uparrow p(s)$, which implies that $s \le t$.
\end{proof}

\begin{figure}
  \begin{center}
    \includegraphics[width=2.75in]{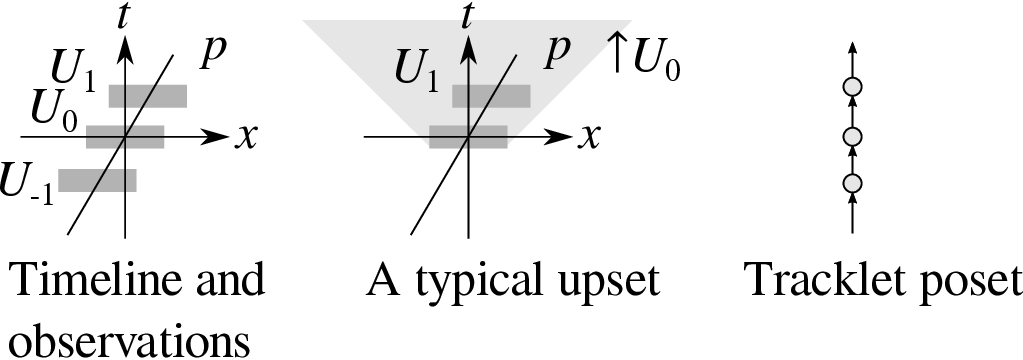}
    \caption{The timeline and some upsets for Example \ref{eg:single_target}}
    \label{fig:single_target}
  \end{center}
\end{figure}

\begin{example}
  \label{eg:single_target}
  Consider the event space in Example \ref{eg:velocity_constraint} with $V=1$ and the timeline $p(t) = (t/2,t)$ (note that the timeline's velocity is $1/2$).  For this timeline, consider the following sequence of observations
  \begin{equation*}
    U_k = \{(x,t) : t/2-1 < x < t/2+1 \text{ and } k-0.1 < t < k+0.1\},
  \end{equation*}
  with the constraint function $C$ given by $C(x) = \uparrow x$.
  The scenario is shown in Figure \ref{fig:single_target}.
  In this case, the tracklet poset $TP_{C}(\{U_k\})$ is isomorphic to the linear ordering of the integers---exactly as the Corollary says.  The function $P$ given by Proposition \ref{prop:single_target} is (up to isomorphism)
  \begin{equation*}
    P(t) = [U_{\lfloor t \rfloor}].
  \end{equation*}
\end{example}

Suppose that $\{p_k:\mathbb{R}^+\to E\}_{k=1}^N$ is a collection of timelines on an event space $E$ for a constraint function $C$.  Suppose that $\col{U}$ is a collection of observations such that for each $U \in \col{U}$ there is a $p_k$ whose image intersects $U$.  Each timeline corresponds to a set of chains in $TP_{C}(\col{U})$, though this correspondence may not be bijective.  Inverting this correspondence---assigning timelines to chains of $TP_{C}(\col{U})$---is the \emph{global track association} problem.

This problem is unsolvable at this level of generality---ambiguities are unavoidable if observations are too limited  (see Examples \ref{eg:two_targets}, \ref{eg:nonspecific}, and \ref{eg:missed_detections})---but a loose bound on the number of targets can be obtained using Dilworth's theorem.  (A tighter bound is obtained in Proposition \ref{prop:timeline_separation}.)

\begin{theorem} (Dilworth's theorem \cite{Dilworth_1950})
For a partial order $P$, there exists an antichain $A$ and a partition of $P$ into a family $K$ of chains, such that the number of chains in $K$ equals the cardinality of $A$.
\end{theorem}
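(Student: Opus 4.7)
The plan is to prove Dilworth's theorem for finite $P$ by induction on $|P|$. One direction---that a chain partition must contain at least as many chains as any antichain has elements---is immediate, since a chain meets an antichain in at most one element. The substantive work is to build a chain partition whose size equals the width $n$, the maximum antichain cardinality.

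For the inductive step, I would choose a maximal chain $C$ in $P$ and consider the width $n'$ of $P \setminus C$. If $n' < n$, then the inductive hypothesis delivers a chain partition of $P \setminus C$ into $n-1$ chains; adjoining $C$ as one more chain produces the desired partition of $P$ into $n$ chains.

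The interesting case is $n' = n$. Here I would fix a maximum antichain $A = \{a_1,\dots,a_n\}$ in $P \setminus C$ and form the down-set $A^{-} = \{x \in P : x \le a_i \text{ for some } i\}$ together with the up-set $A^{+} = \{x \in P : x \ge a_i \text{ for some } i\}$. Maximality of $A$ in $P$ forces $A^{-} \cup A^{+} = P$, since any missing element would extend $A$. Both sets are proper subsets of $P$: the top element of $C$ cannot lie in $A^{-}$ (otherwise extending $C$ upward through the corresponding $a_i$ would contradict the maximality of $C$), and dually for the bottom of $C$ and $A^{+}$. The inductive hypothesis applied to $A^{-}$ and $A^{+}$ then yields chain partitions of size $n$ in each, and gluing the chain through $a_i$ in $A^{-}$ to the chain through $a_i$ in $A^{+}$ produces the required partition of $P$.

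The main obstacle is ensuring that the inductive partitions can be arranged so that each $a_i$ appears as the maximum of its chain in $A^{-}$ and the minimum of its chain in $A^{+}$, so that concatenation at $a_i$ yields a genuine chain rather than just a set-theoretic union. I expect this to follow from the fact that $A$ remains a maximum antichain in each of $A^{-}$ and $A^{+}$: by the easy direction the $n$ chains must meet $A$ in exactly one element each, and the extremality of $a_i$ within $A^{-}$ (respectively $A^{+}$) pins down its position within its chain.
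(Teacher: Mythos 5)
The paper offers no proof of this statement at all: it is quoted as a classical result with a citation to Dilworth's 1950 paper, so there is no in-paper argument to compare against. Judged on its own merits, your proposal is the standard inductive proof of Dilworth's theorem via a maximal chain (essentially Perles's simplification of Dilworth's original argument), and its skeleton is sound: the dichotomy on the width $n'$ of $P \setminus C$, the decomposition $A^{-} \cup A^{+} = P$ forced by $A$ being a maximum antichain of all of $P$ (any element incomparable to every $a_i$ would enlarge $A$), the properness of $A^{-}$ and $A^{+}$ witnessed by the top and bottom of the maximal chain $C$, and the gluing at each $a_i$ are all exactly right. Your worry about the gluing step resolves just as you predict: $a_i$ is a maximal element of $A^{-}$, since any $x > a_i$ in $A^{-}$ satisfies $x \le a_j$ for some $j$, forcing $a_i < a_j$ and contradicting that $A$ is an antichain; hence $a_i$ is the top of whichever of the $n$ chains contains it, and dually it is the bottom of its chain in $A^{+}$.

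Three small points to tighten. First, in the case $n' < n$ you assert a partition of $P \setminus C$ into $n-1$ chains, but the inductive hypothesis only gives $n'$ chains; you should note that $n' \ge n - 1$ always holds (a maximum antichain of $P$ meets the chain $C$ in at most one element), or alternatively that $n' + 1 < n$ chains covering $P$ would contradict the easy direction. Second, for the glued chains to genuinely \emph{partition} $P$ you need $A^{-} \cap A^{+} = A$, which is quick but worth stating: $x \le a_i$ and $x \ge a_j$ give $a_j \le a_i$, forcing $i = j$ and $x = a_i$. Third, your induction establishes the theorem only for finite $P$, whereas the paper states it for an arbitrary partial order; the infinite case of finite width requires a compactness argument on top of your induction, and the statement fails without qualification for posets of infinite width. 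This restriction is harmless in context, since the tracklet posets to which the paper applies the theorem (Corollary \ref{cor:time_chains} and Proposition \ref{prop:search_region}) arise from finite collections of observations.
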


\begin{corollary}
  \label{cor:time_chains}
Suppose that $\col{U}$ is a collection of observations of a set of timelines $\{p_k\}$: each $U\in\col{U}$ intersects the image of at least one timeline and $\col{U}$ covers the union of the images of the timelines.  If the number of timelines is equal to the size of a maximal antichain $A$ in the tracklet poset $TP_{C}(\col{U})$, then one of the families of maximal chains---which partitions $TP_{C}(\col{U})$ (and corresponds to $A$)---corresponds to the set of timelines.  This correspondence is given by a set of functions $P_k:\mathbb{R}^+\to TP_{C}(\col{U})$ as defined in Proposition \ref{prop:single_target}, one for each timeline.  
\end{corollary}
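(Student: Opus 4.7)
The plan is to convert each timeline into a chain in $TP_C(\col{U})$ using Proposition \ref{prop:single_target}, observe that these $N$ chains together cover the entire tracklet poset, and then invoke Dilworth's theorem to show that after removing overlaps they constitute a minimum chain partition of size $N$, which matches the antichain $A$.

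First, I would apply Proposition \ref{prop:single_target} to each timeline $p_k$ to obtain a function $P_k : \mathbb{R}^+ \to TP_C(\col{U})$, and then argue that its image $D_k$ is a chain in the tracklet poset. For $s < t$ with $P_k(s) \neq P_k(t)$, the contrapositive of the proposition rules out $P_k(t) \prec P_k(s)$, and the positive direction $P_k(s) \prec P_k(t)$ follows from the order-preserving nature of $p_k$ combined with the definition of the constraint: between observations $U$ and $V$ recording successive values of $P_k$, the point $p_k(t)$ lies both in $V$ and in $\uparrow p_k(s') \subseteq C(U)$ for the relevant earlier time $s'$, so $U \prec V$ (the required "for every open neighborhood" condition on $\prec$ reduces to this one check by the monotonicity of $C$). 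Transitivity of $\prec$ in $TP_C(\col{U})$ then extends comparability across any number of such transitions, making $D_k$ a chain.

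Second, I would verify that $\bigcup_{k=1}^N D_k = TP_C(\col{U})$. By hypothesis every $U \in \col{U}$ meets the image of at least one timeline $p_k$; at any time $t$ for which $p_k(t) \in U$, the definition of $P_k$ (combined with its well-definedness established inside the proof of Proposition \ref{prop:single_target}) gives $P_k(t) = [U]$, so $[U] \in D_k$. Third, I would invoke Dilworth's theorem. Since $|A| = N$, any chain partition of $TP_C(\col{U})$ must use at least $N$ chains. The timeline-chains $D_1, \ldots, D_N$ cover the poset, so thinning them into a partition—assigning each element of $TP_C(\col{U})$ to exactly one of the $D_k$ that contain it—yields a chain partition (subsets of chains are chains) of size at most $N$, hence exactly $N$ by Dilworth. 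Because a chain meets the antichain $A$ in at most one element while $|A|=N$, each thinned $D_k$ remains non-empty and contains exactly one element of $A$, so the resulting partition is one of the maximal chain families corresponding to $A$, and its chains are in bijection with the timelines via $k \mapsto (\text{thinned } D_k)$.

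The main obstacle is the first step: rigorously showing that the image of $P_k$ is itself a chain rather than merely that the reverse order is forbidden. This requires carefully threading together the lower semicontinuity built into the constraint function, the "every open neighborhood" clause in the definition of $\prec$, and the order-preservation of timelines, using the jump structure of $P_k$ to reduce the argument to adjacent observations. Once that is secured, the covering claim is immediate and the Dilworth bookkeeping is essentially routine.
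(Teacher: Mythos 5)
Your proof is correct and takes essentially the paper's intended route: the paper gives no explicit proof of this corollary, presenting it as an immediate consequence of Dilworth's theorem combined with Proposition \ref{prop:single_target} and the preceding corollary that a timeline covered by $\col{U}$ yields a chain in $TP_{C}(\col{U})$ --- precisely the chain-construction, covering, and thinning argument you spell out. If anything, your explicit handling of the forward comparability of the $P_k$ values (reducing the universal quantifier in $\prec$ to the single check $C(U) \cap V \not= \emptyset$ via monotonicity of $C$, and using the contrapositive of Proposition \ref{prop:single_target} to exclude the reverse order) is more careful than the paper's own implicit treatment.
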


\begin{definition}
  \label{df:search_region}
  The \emph{search region} associated to an event space $(E,\le)$, a collection of observations $\col{U}=\{U_1,\dotsc\}$, and a partition of $TP_{C}(\col{U})$ into chains $c=\{c_1,\dotsc\}$ is given by
  \begin{equation*}
    {\bf S}(c)=\bigcup_i \left( \bigcap_{k=1} \left(\uparrow U_{c_{i,k}}\right) \right),
  \end{equation*}
  where $c_{i,1}, c_{i,2}, \dotsc$ is the list of indices of observations (in $\col{U}$) in the chain $c_i$.  It is of course the case that ${\bf S}(c)$ can also be defined using a maximal antichain of $TP_{C}(\col{U})$.
\end{definition}

\begin{proposition}
  \label{prop:search_region}
  Suppose that $\{p_k:\mathbb{R}\to E\}_{i=1}^N$ is a set of timelines on an event space $(E,\le)$ for a constraint function $C$ and that
\begin{enumerate}
\item $\col{U}$ is a collection of observations of $\{p_k\}$,
\item The size of a maximal antichain $A$ in the tracklet poset $TP_{C}(\col{U})$ is $N$ (the number of timelines), and 
\item All chains of $TP_{C}(\col{U})$ are finite.
\end{enumerate}
Then the search region associated to $A$ contains the image $p_k((T,\infty))$ of each timeline for some sufficiently large finite $T$.
\end{proposition}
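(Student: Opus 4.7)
The plan is to apply Dilworth's theorem to obtain a chain partition of $TP_{C}(\col{U})$ associated to $A$, identify each chain with a timeline via Corollary \ref{cor:time_chains}, and then use the order-preserving nature of each timeline to show that once it has visited all observations in its associated chain, it remains forever inside the corresponding finite intersection of upsets.

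First, I apply Dilworth's theorem to the maximal antichain $A$ of size $N$, producing a partition $c=\{c_1,\dotsc,c_N\}$ of $TP_{C}(\col{U})$ into $N$ chains. By Corollary \ref{cor:time_chains}, this partition matches the timelines bijectively, with chain $c_k$ identified with timeline $p_k$ via the function $P_k:\mathbb{R}^+\to TP_{C}(\col{U})$ of Proposition \ref{prop:single_target}. To eliminate the nuisance of non-singleton $\prec$-equivalence classes flagged in Remark \ref{rem:transitive}, I first refine $\col{U}$ (using Proposition \ref{prop:refinement}) so that its members are pairwise disjoint; then each equivalence class is a singleton $\{U\}$, and the observations appearing in $c_k$ are in bijection with the elements of $c_k$.

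Second, the key step is the claim that for every observation $U$ with $[U]\in c_k$ there is a finite time $t_U\in\mathbb{R}^+$ with $p_k(t_U)\in U$. This follows from the correspondence between $P_k$ and $c_k$: every element of $c_k$ must be realized as $P_k(t)$ for some $t$, and the definition of $P_k$ in Proposition \ref{prop:single_target} forces $p_k(t')$ to lie in some representative of $[U]$ for some $t'\le t$; disjointness of $\col{U}$ then identifies that representative with $U$ itself. Once $t_U$ is in hand, the order-preserving property of $p_k$ gives $p_k(t)\in \uparrow p_k(t_U)\subseteq \uparrow U$ for every $t\ge t_U$, using the definition $\uparrow U = \bigcup_{x\in U}\uparrow x$.

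Third, assumption (3) makes each chain $c_k$ finite, so $\{t_U : [U]\in c_k\}$ is a finite set; I set $T_k = \max\{t_U : [U]\in c_k\}$ and $T = \max_k T_k$. For every $t>T$ and every $k$, the point $p_k(t)$ lies in $\bigcap_j \uparrow U_{c_{k,j}}$, and hence in the union $\mathbf{S}(c)$ of Definition \ref{df:search_region}. The step I expect to be the main obstacle is the second one: justifying rigorously that the chain $c_k$ is realized pointwise along $p_k$, so that every observation whose equivalence class lies in $c_k$ really has a visiting time. Without the disjointness reduction via Proposition \ref{prop:refinement}, the correspondence only gives visits to \emph{some} representative of each $\prec$-class, and I would need an extra argument to move from that representative to the specific $U$ appearing in the definition of $\mathbf{S}(c)$.
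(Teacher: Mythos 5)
Your proof is correct and takes essentially the same route as the paper's: invoke Corollary \ref{cor:time_chains} (hence Dilworth) to match chains to timelines via the functions $P_k$, use finiteness of the chains to extract a last visiting time $T$, and use the order-preserving property of each timeline to keep $p_k((T,\infty))$ inside the intersection of upsets, finishing with a finite maximum over the $N$ timelines. Your explicit reduction to disjoint observations via Proposition \ref{prop:refinement}, which turns $\prec$-equivalence classes into singletons, is a detail the paper leaves implicit (it is deferred to Remark \ref{rem:transitive}) rather than a different argument.
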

\begin{proof}
  By the previous Corollary, one of the maximal antichains corresponds to the set of timelines.  This correspondence assigns a chain of observations $\{\dotsc,U_0,U_1,\dotsc\}$ to each timeline $p$; and that each timeline passes through each such observation in the chain.  The timeline $p$ is order-preserving along this chain of observations by Proposition \ref{prop:single_target}, but since each chain is finite in $TP_{C}(\col{U})$, the function $P$ defined for this timeline in Proposition \ref{prop:single_target} attains its maximum value for some $T<\infty$.  Thereafter, it must lie within the intersection $\bigcap_k \uparrow U_k$.  Taking the (finite) union over all timelines and the (finite) maximum over all such $T$ completes the proof.
\end{proof}

Good \emph{track custody} consists of optimizing against two conflicting requirements:
\begin{enumerate}
\item Constructing an accurate constraint function---so that target trajectories are indeed timelines in the event space, and
\item Ensuring that search regions don't get too big.
\end{enumerate}
Although a probabilistic model of target behavior is not used by the tracklet poset construction,
it is useful to use such a model to characterize its performance in realistic settings.
Under a probabilistic model of target behavior,
the first requirement will tend to require that the event space produce larger sets $\uparrow U$ given an observation $U$ to capture infrequent behaviors or timelines, while the latter requires that that these same sets be smaller.  We make these requirements considerably more precise in Section \ref{sec:sample_rate}.

If only the event space and a collection of observations are available, the best one can do is to find minimal covers of the tracklet poset by chains (namely, find maximal antichains).  There are existing greedy algorithms to do this, for instance a good, but probably non-optimal example is given in \cite{Jourdan_1994}.  The algorithm described in that paper inspires Algorithm \ref{alg:posettrack} described in the later sections of this article.
 
Nevertheless, finding a minimal cover by maximal chains is the best that can be done at this level of generality.  Problems still can arise, as the following examples show.  Ultimately, we will distinguish between chains and \emph{tracks}.  Tracks are chains that have been selected by our Algorithm as being the actual tajectories of objects.

\begin{figure}
  \begin{center}
    \includegraphics[width=2.75in]{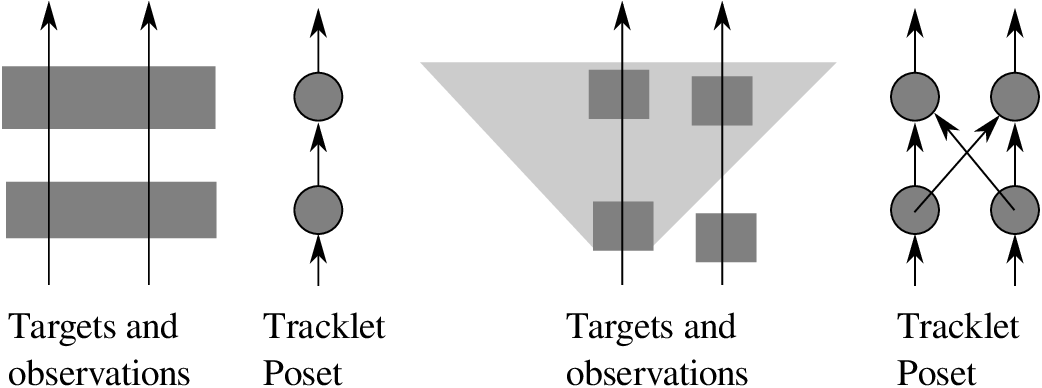}
    \caption{Two timelines, observations of them, and their associated tracklet posets. At left, there is aliasing because observations touch both timelines.  At right, the observations separate the two timelines, but two different sets of maximal chains exist.}
    \label{fig:two_targets}
  \end{center}
\end{figure}

\begin{example}
  \label{eg:two_targets}
Given a single antichain, there may be multiple different sets of maximal chains.  This quite often happens if two timelines are far enough apart to be separated by individual observations, but are not kinematically far apart to remain separable.  This situation is shown at right in Figure \ref{fig:two_targets}.
\end{example}

If the hypothesis given by the Corollary is violated, more trouble can occur.

\begin{example}
  \label{eg:nonspecific}
For instance, as shown at left in Figure \ref{fig:two_targets}, two timelines can be covered by non-specific observations.  This results in the two timelines being aliased into a single maximal chain.
\end{example}

False observations can result in spurious maximal chains.

\begin{figure}
  \begin{center}
    \includegraphics[width=1.25in]{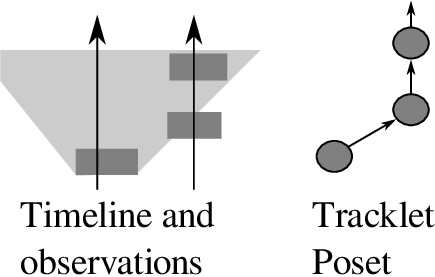}
    \caption{Too sparse observations result in misleading tracklet posets}
    \label{fig:missed_detections}
  \end{center}
\end{figure}

\begin{example} 
  \label{eg:missed_detections}
If the observations are too sparse, the chains can also become misleading.
For instance, Figure \ref{fig:missed_detections} shows an example in which there is one initial observation of the timeline and then many other false---but plausible---observations from other timelines.  We can cover this tracklet poset with one chain, even though there are multiple timelines.  The conclusion is that we must obtain enough observations even to count the number of targets present.  This is studied more extensively in Section \ref{sec:sample_rate}.
\end{example}

\section{Controlling search region size}
\label{sec:sample_rate}

When the event space is fixed by the problem---and possibly known \emph{a priori}---control of the size of search regions is essentially a matter of keeping observations small enough.  We divide the event space into a space and time portion to focus on the problem of tracking objects.

\begin{definition}
  \label{df:temporal_event_space}
A \emph{temporal event space} is an event space of the form $(E,\le)=(X\times\mathbb{R},\le)$ in which $(x,s) \le (y,t)$ implies $s \le t$.  (The reader is cautioned that the partial orders $\le$ and $\prec$ on $E$ still represent \emph{causality} between events and observations.)  We call the first component (in $X$) \emph{space} and the second coordinate \emph{time}.
\end{definition}

\subsection{Target separation}

We first address the possibility of confusing targets as described in Examples \ref{eg:two_targets} and \ref{eg:nonspecific} in the previous section.

\begin{proposition}
  \label{prop:timeline_separation}
  Assume that $(X,d)$ is a metric space and that $\col{P}=\{p_k:\mathbb{R}\to X\}$ is a collection of functions whose graphs\footnote{The \emph{graph} of the timeline as a function.} are timelines for a constraint function $C$ that generates a temporal event space $(X\times\mathbb{R},\le)$ and whose points are always separated by some minimal distance $\delta$
  \begin{equation*}
    d(p_i(t),p_j(t)) > \delta\text{ for all }t.
  \end{equation*}

  If $\col{U}$ is a collection of observations for which the diameter of the set $C(U) \cap X \times\{t\}$ (measured in the metric $d$) is less than $\delta/2$ for each $U\in\col{U}$ and $t\in\mathbb{R}$, then there is only one partition of $TP_{C}(\col{U})$ into maximal chains.
\end{proposition}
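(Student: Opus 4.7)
The plan is to show that the tracklet poset $TP_{C}(\col{U})$ decomposes as an order-disjoint union of chains---one per timeline---which forces the partition into maximal chains to be unique. I would first exhibit such a partition, then establish its uniqueness via pairwise incomparability of different timelines' chains.

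By Proposition \ref{prop:single_target} and Corollary \ref{cor:time_chains}, for each timeline $p_k$ the observations in $\col{U}$ meeting its graph form a chain $K_k$ in $TP_{C}(\col{U})$, and collectively these $K_k$ cover every observation of any timeline. The central claim is pairwise incomparability: for $k\neq j$, no element $U\in K_k$ is $\prec$-comparable to any $V\in K_j$. Suppose for contradiction $U\prec V$; since $U$ is open this is equivalent to $C(U)\cap V\neq\emptyset$, so fix a witness $(z,\tau)\in C(U)\cap V$. The axiom $x\in C(x)$ gives $V\subseteq C(V)$, so both $C(U)\cap(X\times\{\tau\})$ and $V\cap(X\times\{\tau\})$ have $d$-diameter strictly less than $\delta/2$. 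If $(p_k(\tau),\tau)$ can be placed in $C(U)$ and $(p_j(\tau),\tau)$ in $V$ at that same time, the triangle inequality yields
\begin{equation*}
d(p_k(\tau),p_j(\tau)) \le d(p_k(\tau),z)+d(z,p_j(\tau)) < \delta/2+\delta/2 = \delta,
\end{equation*}
contradicting the separation hypothesis $d(p_k(\tau),p_j(\tau))>\delta$.

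The main obstacle is justifying those two membership statements at the \emph{specific} time $\tau$. The $p_k$ side is easier: since $U$ contains some $(p_k(s_k),s_k)$ and $p_k$ is order-preserving, $(p_k(\tau),\tau)\in \uparrow U$ for $\tau\ge s_k$, and lower semicontinuity of $C$ should place it inside $C(U)$ at time $\tau$. The $p_j$ side is more delicate because $V$ need not contain $(p_j(\tau),\tau)$; one remedy is to choose $\tau$ to equal the time at which $V$ observes $p_j$ and then use the spatial smallness of $C(U)$ slice-by-slice to slide the intersection witness into that slice. If this is awkward, Proposition \ref{prop:refinement} can be invoked to refine $\col{U}$ into observations thin enough in time that each is localized to a single time slice, in which case the matching-time argument becomes immediate.

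Once pairwise incomparability is established, any maximal chain in $TP_{C}(\col{U})$ must be contained in a single $K_k$ (by incomparability) and must exhaust it (by maximality within that order-isolated block), so the partition into maximal chains is uniquely $\{K_k\}$, which is the conclusion of the proposition.
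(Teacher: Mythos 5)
Your core mechanism is the same as the paper's: the paper's entire proof consists of asserting that if $U_1$ and $U_2$ intersect distinct timelines then $C(U_1)\cap U_2=\emptyset$ and $C(U_2)\cap U_1=\emptyset$, hence $U_1$ and $U_2$ are incomparable, and your triangle-inequality argument with the two slice-diameter bounds is the natural fleshing-out of that assertion. However, the obstacle you flag yourself is a genuine gap, and neither of your proposed remedies closes it. Lower semicontinuity of $C$ does not place $(p_k(\tau),\tau)$ inside $C(U)$: order-preservation of the timeline only says $p_k(\tau)$ lies above the observed point in the order $\le$, and since $\le$ is the \emph{transitive closure} of $\prec$ (Definition \ref{def:event_space}), this puts $(p_k(\tau),\tau)$ only in iterated images $C(C(\cdots C(U)))$, whose slices need not obey the $\delta/2$ diameter bound. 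What is actually needed is the modeling convention stated in the paper's prose after Definition \ref{df:constraint_function} --- that an object observed in $U$ has the rest of its history inside $C(U)$ --- together with some care about witness times $\tau$ earlier than the time at which $V$ observes $p_j$. Your fallback via Proposition \ref{prop:refinement} also does not work: refining $\col{U}$ produces a \emph{different} poset, and the induced order-preserving map $TP_C(\col{U}')\to TP_C(\col{U})$ transports no uniqueness statement backward to the original $TP_C(\col{U})$, which is what the proposition is about.

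Separately, your final step overclaims. You invoke Corollary \ref{cor:time_chains} to assert that each timeline's observations form a single chain $K_k$ and that the unique partition is $\{K_k\}$, one chain per timeline. But that corollary's hypothesis --- that a maximal antichain has size equal to the number of timelines --- is not assumed here, and the paper explicitly notes immediately after the proposition that under these hypotheses ``any given timeline may be broken up into several maximal chains.'' Two observations of the \emph{same} timeline need not be $\prec$-comparable (e.g.\ under a time-gated constraint function), so the blocks need not be chains. The correct conclusion available from the incomparability argument is only that no chain of $TP_C(\col{U})$ can mix observations of different timelines, which is exactly where the paper's proof stops; your stronger structural claim about $\{K_k\}$ would be false in general.
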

The hypotheses of the Proposition imply that the only ambiguity in the correspondence between maximal chains of $TP_{C}(\col{U})$ and the set of timelines is that any given timeline may be broken up into several maximal chains.  If the constraint function is given by $C(x) = \uparrow x$, it will usually be impossible to satisfy the hypotheses because the diameter of $\uparrow U$ is typically infinite.

\begin{proof}
  The hypotheses imply that if $U_1$ and $U_2$ are two observations that intersect two distinct timelines $p_1$ and $p_2$ respectively, then $C(U_1) \cap U_2 = \emptyset$ and $C(U_2) \cap U_1 = \emptyset$.  Thus $U_1 \nprec U_2$ and $U_2 \nprec U_1$.
\end{proof}

To avoid searching for timelines over too much area, we can quantify and bound the area contained in the search region.

\begin{proposition}
  \label{prop:search_region_size}
  Suppose that $\col{P}=\{p_k:\mathbb{R} \to (X\times\mathbb{R},\le)\}$ is a collection of $N$ timelines in a temporal event space $(X\times \mathbb{R},\le)$ and that $m$ is a measure on $X$.
  Suppose that $\col{U}$ is a collection of observations of $\col{P}$ satisfying 
\begin{equation}
\label{eq:measure_bound}
m((\uparrow U) \cap X\times\{t\}) < M(t)
\end{equation}
for each $U\in\col{U}$ and each $t\in\mathbb{R}$, where $M$ is an increasing function of $t$.
  Then
  \begin{equation*}
    m({\bf S}(c)\cap X\times\{t\}) \le M(t) N < \infty
  \end{equation*}
  for all $t\in\mathbb{R}$.
\end{proposition}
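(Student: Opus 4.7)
The plan is to bound the measure of the $t$-slice of $\mathbf{S}(c)$ by splitting along the chain partition, applying monotonicity of $m$ to each chain's contribution, and then summing.

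First I would recall from Definition \ref{df:search_region} that
\begin{equation*}
\mathbf{S}(c) \cap X\times\{t\} \;=\; \bigcup_i \Bigl(\bigcap_k \uparrow U_{c_{i,k}}\Bigr) \cap X\times\{t\},
\end{equation*}
where the outer union runs over the chains $c_1,\ldots,c_N$ in the partition. The number of chains equals the size of a maximal antichain (by Dilworth's theorem, cited just before Corollary \ref{cor:time_chains}), and in the setup inherited from Proposition \ref{prop:search_region} and Corollary \ref{cor:time_chains} this antichain has exactly $N$ elements, one per timeline; so there are $N$ chains in the partition.

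Next I would use subadditivity of the measure $m$ to pull the union apart:
\begin{equation*}
m\bigl(\mathbf{S}(c)\cap X\times\{t\}\bigr) \;\le\; \sum_{i=1}^{N} m\Bigl(\bigcap_k \uparrow U_{c_{i,k}} \cap X\times\{t\}\Bigr).
\end{equation*}
For each fixed $i$, the intersection over $k$ is contained in any single term $\uparrow U_{c_{i,k}}$ of that intersection (this is the only place monotonicity of $m$ is used), so by hypothesis (\ref{eq:measure_bound})
\begin{equation*}
m\Bigl(\bigcap_k \uparrow U_{c_{i,k}} \cap X\times\{t\}\Bigr) \;\le\; m\bigl(\uparrow U_{c_{i,1}} \cap X\times\{t\}\bigr) \;<\; M(t).
\end{equation*}
Summing these $N$ bounds gives the conclusion $m(\mathbf{S}(c)\cap X\times\{t\}) \le N\cdot M(t) < \infty$.

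The only real subtlety — and the only place anything could go wrong — is justifying that the partition has at most $N$ chains; every other step is routine measure-theoretic bookkeeping (subadditivity of $m$ over a finite union, and monotonicity of $m$ for a set contained in one of its intersectands). The monotonicity of $M$ in $t$ is not even used in the bound for a fixed $t$; it is presumably included to make the bound $M(t)N$ uniform over future times, which matches the role of search regions in track custody.
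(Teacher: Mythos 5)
Your proof is correct and follows essentially the same route as the paper's: invoke Corollary \ref{cor:time_chains} to identify the chain partition with the $N$ timelines, apply subadditivity of $m$ over the finite union of chains, then monotonicity to replace each intersection $\bigcap_k \uparrow U_{c_{i,k}}$ by a single term $\uparrow U_{c_{i,1}}$ bounded via \eqref{eq:measure_bound}. Your closing observation that the monotonicity of $M$ plays no role in the fixed-$t$ bound is also accurate and goes slightly beyond what the paper states explicitly.
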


\begin{proof}
  By Definition \ref{df:search_region} and Corollary \ref{cor:time_chains}, we have that
  \begin{equation*}
    {\bf S}(c)=\bigcup_i \left( \bigcap_{k=1} \left(\uparrow U_{c_{i,k}}\right) \right),
  \end{equation*}
  where $c$ is the set of chains corresponding to the finite collection of timelines.  This means that
  \begin{eqnarray*}
    m\left({\bf S}(c) \cap X\times\{t\}\right) &\le& \sum_{i=1}^N m\left( X\times\{t\} \cap \bigcap_{k=1} \left(\uparrow U_{c_{i,k}}\right) \right)\\
    &\le&\sum_{i=1}^N m\left( X\times\{t\} \cap (\uparrow U_{c_{i,1}})\right) \\
    &\le& M(t) N,
  \end{eqnarray*}
  as desired.
\end{proof}

\subsection{Maintaining custody of timelines}

If a probabilistic model of target behavior is available, it is reasonable to use this model to assess how well the constraint function agrees with it.
This will allow us to assess performance of any tracking algorithm based upon the constraint function.
The probability that a timeline is consistent with a constraint function quantifies how well one can keep custody of the timelines.  There is a trade off between the size of the search region and the required frequency of observations required to maintain this probability.

A continuous path $p:\mathbb{R} \to E$ in an event space might not be consistent with a given constraint function $C$.  If $p$ represents a true trajectory of the system being observed, the tracklet poset associated to a set of observations of $p$ might not correspond to $p$ according to our methodology thus far; this is a \emph{loss of custody}.  Of course, using the upsets of $\le$ would ensure no loss of custody, but this causes ambiguity if there are multiple timelines and the search region can be impractically large.

\begin{definition}
  \label{def:transition_probability}
  Suppose that $(E,\le) = (X \times \mathbb{R},\le)$ is a temporal event space.  The \emph{transition probability density function} $P$ is a Borel measure on $E\times E = (X\times\mathbb{R})\times (X\times\mathbb{R})$ such that
\begin{enumerate}
\item Each time slice $t\in \mathbb{R}$ is a probability density function
\begin{equation*}
P((x,s),(y,t))=P_{(x,s),t}(y)
\end{equation*}
for each $(x,s) \in E$,
\item $P$ respects causality: $P((x,s),(y,t)) = 0$ whenever $(y,t) \le (x,s)$,
\item $P$ is continuous: for each measurable $A \subseteq E$, 
\begin{equation*}
\lim_{t\to t'} |P_{(x,s),t}(A \cap \{t\}) - P_{(x,s),t'}(A\cap \{t'\})| = 0, \text{and}
\end{equation*}
\item $P$ is conditioned on $(x,s)$: $P_{(x,s),s} = \delta_x$.
\end{enumerate}
\end{definition}

In this situation, we can determine the likelihood that timelines can escape from a given constraint function.

\begin{definition}
\label{def:loss_custody}
  Suppose $C$ is a constraint function on the temporal event space $(E,\le)=(X\times\mathbb{R},\le)$ and that $P$ is a transition probability function.  The \emph{loss of custody probability up to time $T$} is given by
  \begin{equation*}
    LC_{C}(T) = 1 - \inf_{(x,s)\in E} \inf_{U\text{ open,} \atop (x,s)\in U} \inf_{s \le t \le s+T} P_{(x,s),t}(C(U) \cap \{t\}).
  \end{equation*}
\end{definition}

In Section \ref{sec:airtraffic}, we show how this loss of custody probability can be estimated from training data that includes identities of the timelines.  By tuning parameters of the constraint function $C$, one tries to ensure that the loss of custody probability is kept to an acceptably low value.  Generally, $LC_{C}(T)$ need not be a monotonic function of $T$.  This especially happens if the timelines exhibit recurrent behaviors, returning to the same (predictable) place multiple times.  

\begin{proposition}
\label{prop:loss_custody}
$LC_{C}(T)$ is an upper semicontinuous function of $T$ for which $LC_{C}(0)=0$.  Thus, given an $L>0$, there is a $T'>0$ for which all $0 \le T < T'$ satisfy
\begin{equation*}
LC_{C}(T) < L.
\end{equation*}
\end{proposition}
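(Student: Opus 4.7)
The plan is to verify the two asserted properties of $LC_C$---namely $LC_C(0)=0$ and upper semicontinuity---and then combine them with the obvious non-negativity $LC_C(T)\ge 0$ to deduce the final existence statement. The $T=0$ equality is essentially immediate: at $T=0$ the inner infimum over $t\in[s,s]$ reduces to the single value $t=s$, giving $P_{(x,s),s}(C(U)\cap\{s\})=\delta_x(C(U)\cap\{s\})$ by property (4) of Definition~\ref{def:transition_probability}. Since $C$ is a constraint function, $(x,s)\in C(x,s)\subseteq C(U)$ for every open $U\ni(x,s)$, so $x$ lies in the $X$-slice $\{y:(y,s)\in C(U)\}$ and the Dirac mass evaluates to $1$; the remaining infima over $(x,s)$ and $U$ still yield $1$, so $LC_C(0)=0$.

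For upper semicontinuity I would first note monotonicity: enlarging $T$ enlarges the feasible set of $t$'s, so the inner infimum is non-increasing and $LC_C$ is non-decreasing, reducing upper semicontinuity at any point to right-continuity there. Property (3) of Definition~\ref{def:transition_probability} says that for each fixed $(x,s)$ and $U$ the map $t\mapsto g(t):=P_{(x,s),t}(C(U)\cap\{t\})$ is continuous and takes the value $1$ at $t=s$, so $m(x,s,U,T):=\inf_{t\in[s,s+T]}g(t)$ is continuous in $T$ with $m(x,s,U,0)=1$. The main obstacle I expect is that this continuity is only pointwise in $(x,s),U$: to conclude the right-continuity $\sup_{(x,s),U}(1-m(x,s,U,T))\to 0$ as $T\to 0^+$ one needs a uniformity in $(x,s)$ and $U$ that is not a formal consequence of the pointwise hypothesis. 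I would try to recover it by exploiting that $g(x,s,U,s)=1$ identically, so only one-sided deviations from a fixed constant must be controlled, together with the neighborhood structure of $C(U)$ around $(x,s)$, to run a translation-type or equicontinuity argument on $E=X\times\mathbb{R}$.

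Once upper semicontinuity at $T=0$ is in hand, $\limsup_{T\to 0^+}LC_C(T)\le LC_C(0)=0$ combined with $LC_C\ge 0$ forces $\lim_{T\to 0^+}LC_C(T)=0$, so for any prescribed $L>0$ there is a $T'>0$ with $LC_C(T)<L$ on $[0,T')$, as claimed.
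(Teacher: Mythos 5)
Your $T=0$ computation and your closing deduction are both sound and coincide with the paper's: condition (4) of Definition \ref{def:transition_probability} reduces the inner quantity at $T=0$ to $\delta_x(C(U)\cap\{s\})$, which equals $1$ because $U\subseteq C(U)$ for any constraint function, and monotonicity of $LC_C$ together with upper semicontinuity at $0$ and nonnegativity indeed yields the stated $T'$. The problem is that the heart of the proposition is the upper semicontinuity itself, and there your proposal stops at exactly the hard step. You correctly observe that for each fixed $(x,s)$ and $U$ the function $m(x,s,U,T)=\inf_{t\in[s,s+T]}P_{(x,s),t}(C(U)\cap\{t\})$ is continuous in $T$ with value $1$ at $T=0$, and that pushing this through the outer infima over $(x,s)$ and $U$ requires a uniformity not contained in the hypotheses---but the promised ``translation-type or equicontinuity argument'' is never produced, and no such argument follows from what is assumed. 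Condition (3) of Definition \ref{def:transition_probability} is a pointwise-in-$(x,s)$ continuity statement, so $LC_C(T)=\sup_{(x,s),U}\bigl(1-m(x,s,U,T)\bigr)$ is a supremum of continuous functions, hence automatically \emph{lower} semicontinuous, but nothing prevents the moduli of continuity from degenerating along a sequence of base points $(x_n,s_n)$ and observations $U_n$, which would destroy upper semicontinuity at $T=0$. As written, your proposal proves the proposition only modulo its one nontrivial claim.

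For what it is worth, your diagnosis is sharper than the paper's own writeup. The paper proves a lemma (the infimum of a lower semicontinuous function over an upper semicontinuous set-valued map is lower semicontinuous in the parameter), applies it with $f(t)=P_{(x,s),t}(C(U)\cap\{t\})$ and $A(T)=[s,s+T]$ to conclude that $h(T)=\inf_{s\le t\le s+T}P_{(x,s),t}(C(U)\cap\{t\})$ is lower semicontinuous for each \emph{fixed} $(x,s)$ and $U$, and then asserts that this ``suffices'' for upper semicontinuity of $LC_C$. That assertion silently commutes semicontinuity with the infimum over the uncountable family of $(x,s)$ and $U$---and an infimum of lower semicontinuous functions need not be lower semicontinuous---which is precisely the uniformity issue you flagged. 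So the gap you identified is genuine, it is present in the paper's argument as well, and closing it would require an added hypothesis, e.g.\ strengthening condition (3) of Definition \ref{def:transition_probability} to equicontinuity of $t\mapsto P_{(x,s),t}(C(U)\cap\{t\})$ at $t=s$ uniformly over $(x,s)$ and $U$, or a compactness assumption on the relevant family; your hope of extracting it from the pointwise hypotheses alone cannot succeed as stated.
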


This Proposition states that under the hypotheses on $P$, and for a fixed constraint function, it is always possible to reduce the sampling interval to meet some desired loss of custody bound.  Before proving Proposition \ref{prop:loss_custody}, we prove the following useful Lemma.

\begin{lemma}
If $f: X \to \mathbb{R}$ is a lower semicontinuous function on a topological space $X$ and $A$ is an upper semicontinuous set-valued map from $\mathbb{R}$ to the open sets of $X$, then $g:\mathbb{R} \to \mathbb{R}$ given by
\begin{equation*}
g(t) = \inf_{x\in A(t)} f(x)
\end{equation*}
is lower semicontinuous.
\end{lemma}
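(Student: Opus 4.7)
The plan is to verify lower semicontinuity of $g$ directly by showing that the super-level set $\{t : g(t) > c\}$ is open in $\mathbb{R}$ for every $c \in \mathbb{R}$. Fix such a $c$ and pick a point $t_0$ where $g(t_0) > c$. Since the infimum is strictly greater than $c$, I can slip in an intermediate value: choose $c''$ with $c < c'' < g(t_0)$. Then for every $x \in A(t_0)$, the inequality $f(x) \geq g(t_0) > c''$ holds by definition of the infimum. This is the step that forces us to work with a strict intermediate $c''$ rather than $c$ itself, since the infimum of values $>c$ need only be $\geq c$.

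Next I would invoke the lower semicontinuity of $f$ to conclude that the set $V_{c''} = \{x \in X : f(x) > c''\}$ is open in $X$, and by the previous paragraph $A(t_0) \subseteq V_{c''}$. This is exactly the hypothesis needed to apply upper semicontinuity of the set-valued map $A$: because $V_{c''}$ is open and $A(t_0) \subseteq V_{c''}$, the set $\{t \in \mathbb{R} : A(t) \subseteq V_{c''}\}$ is an open neighborhood $W$ of $t_0$.

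For every $t \in W$, every $x \in A(t)$ satisfies $f(x) > c''$, so taking the infimum over $A(t)$ yields
\begin{equation*}
g(t) = \inf_{x \in A(t)} f(x) \;\geq\; c'' \;>\; c.
\end{equation*}
Thus $W \subseteq \{t : g(t) > c\}$, showing that this super-level set is open and concluding that $g$ is lower semicontinuous.

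The main obstacle is essentially conceptual rather than computational: one must remember to introduce the auxiliary threshold $c''$ strictly between $c$ and $g(t_0)$, since the naive choice of working with $\{f > c\}$ only yields $g(t) \geq c$ on the nearby neighborhood, which is insufficient for the strict inequality required. Beyond that, the proof is just a clean unwinding of the definitions of lower semicontinuous function and upper semicontinuous set-valued map; the requirement that $A(t)$ be open is not actually used, but the assumption is harmless.
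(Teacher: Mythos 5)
Your proof is correct, and it follows the same basic strategy as the paper's: exhibit the superlevel set $\{t : g(t) > c\}$ as open by way of sets of the form $\{t : A(t) \subseteq f^{-1}((b,\infty))\}$, which are open by upper semicontinuity of $A$ combined with lower semicontinuity of $f$. But your auxiliary threshold $c''$ is not cosmetic---it repairs an actual flaw in the paper's argument. The paper asserts the exact identity $g^{-1}((a,\infty)) = \{t \in \mathbb{R} : A(t) \subseteq f^{-1}((a,\infty))\}$ and, for the converse inclusion, claims that $A(s) \subseteq f^{-1}((a,\infty))$ implies $\inf_{x \in A(s)} f(x) > a$. That implication is false in general: take $X = \mathbb{R}$, $f(x) = x$, and $A(s) = (a, a+1)$; then $f > a$ everywhere on $A(s)$ yet the infimum equals $a$ exactly---precisely the ``infimum of values $>a$ need only be $\geq a$'' trap you flag. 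Your argument proves instead the correct identity $g^{-1}((c,\infty)) = \bigcup_{c'' > c} \{t : A(t) \subseteq f^{-1}((c'',\infty))\}$, a union of open sets rather than a single one, which is all that openness requires. Your closing remark is also apt: the hypothesis that $A$ takes open values is never used---only the upper semicontinuity property that $\{t : A(t) \subseteq V\}$ is open for open $V$---and indeed the paper itself applies the lemma with $A(T) = [s, s+T]$, whose values are closed, so dropping that hypothesis is not merely harmless but necessary for the intended application. One minor caveat worth recording: if $A(t)$ may be empty then $g(t) = +\infty$, so $g$ should be read as extended-real-valued; this affects neither argument.
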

\begin{proof}
Let $a\in\mathbb{R}$ be given.  We wish to show that $g^{-1}((a,\infty))$ is open, by showing that
\begin{equation*}
  g^{-1}((a,\infty)) = \{t \in \mathbb{R} : A(t) \subseteq f^{-1}((a,\infty)) \},
\end{equation*}
the right side of which is open because $A$ is upper semicontinuous and $f$ is lower semicontinuous.

Suppose that $t \in g^{-1}((a,\infty))$.  This implies that $a < \inf_{x\in A(t)} f(x)$, which is equivalent to the statement that $a < f(x)$ for all $x\in A(t)$, which is itself equivalent to $A(t) \subseteq f^{-1}((a,\infty))$.

Conversely, suppose that $s$ is such that $A(s) \subseteq f^{-1}((a,\infty))$.  This implies that $\inf_{x \in A(s)} f(x) > a$, so that $g(s) > a$, or in other words $s\in g^{-1}((a,\infty))$.
\end{proof}

\begin{proof} (Proposition \ref{prop:loss_custody})
Observe that 
\begin{eqnarray*}
LC_{C}(0) &=& 1 - \inf_{(x,s)\in E} \inf_{U\text{ open,} \atop (x,s)\in U} P_{(x,s),s}(C(U) \cap \{s\})\\
&=& 1 - \inf_{(x,s)\in E} \inf_{U\text{ open,} \atop (x,s)\in U} \delta_x(C(U) \cap \{s\})\\
&=& 0,
\end{eqnarray*}
because $C$ is a constraint function, and so $U\subseteq C(U)$ for all open $U$.

To show that $LC_{C}$ is upper semicontinuous, it suffices to show that
\begin{equation*}
 h(T) = \inf_{s \le t \le s+T} P_{(x,s),t}(C(U) \cap \{t\})  
\end{equation*}
is lower semicontinuous for $T \ge 0$.  This is a direct application of the Lemma as follows.  Let
\begin{equation*}
  f(t) = P_{(x,s),t}(C(U) \cap \{t\}),
\end{equation*}
which is continuous by the continuity of $P$ and the fact that $C(U)$ is Borel measurable.  (The Lemma only requires semicontinuity.)  Also let $A$ be the set-valued function defined by
\begin{equation*}
  A(T) = [s,s+T],
\end{equation*}
which is upper semicontinuous for $T \ge 0$.
\end{proof}

\section{Tracking process flow}
\label{sec:process}

Our approach to tracking proceeds through three stages:
\begin{enumerate}
\item Designing the constraint function based upon training data,
\item Constructing the tracklet poset using the constraint function,
\item Applying a weighted optimization to select \emph{tracks} from possible timelines.
\end{enumerate}
Stages (2) and (3) are a batch algorithm, suited for forensic analysis rather than for realtime tracking.
Should an incremental tracking algorithm be needed, Stage (2) can be constructed incrementally.
It is future work to extend Stage (3) to an incremental algorithm.

The constraint function is designed offline through a manual characterization of the \emph{possible} behaviors of a single target.  Since all of the observations in our dataset are labeled with target identity, it is possible to visualize the set of all possibilities for future time-location pairs, and to extract the necessary parameters from this visualization.

As noted, although the theory of constraint functions distinguishes between observations (open sets) and detections (singletons),
this difference is not practically that important.  By slightly enlarging the open sets generated by the estimated constraint function,
error in treating detections as observations did not impact our results.

Given a set of observations, Definition \ref{df:tracklet_poset} gives an algorithmic prescription for the tracklet poset.  We implemented the tracklet poset as a directed graph whose edges are pairs of observations.  To avoid computational issues, we only constructed edges using pairs of observations that fell within a sliding window of time.

As an aside, if the sample rate present in the data is too low or the constraint function is too lax then the hypotheses of Proposition \ref{prop:timeline_separation} do not apply.  In that case, separate targets will become ambiguous.  As the Proposition states, the possible behaviors of the targets control the necessary sample rate: more maneuverable targets require higher sample rates.  Although we do not usually have access to the actual (continuous) target timelines, with training data it is possible to estimate the required sampling rate to keep target timelines separated if the target identities are known.

\subsection{Local weighting optimization}
\label{sec:weighting}

One way to resolve ambiguities in the track association problem is to prefer certain timelines over others, thereby enriching the model.
We need to determine which possible timelines of an object (a \emph{chain} in the tracklet poset) correspond to a \emph{track} describing an object's actual trajectory.
Our approach is to enrich the tracklet poset graph $TP_{C}(\col{U})$ with weights.  We interpret the weight of an edge as the minimum amount of energy necessary to follow a timeline from one observation to the next along that edge.  Larger weights require more energy, and therefore are less kinematically likely.

Once the tracklet poset graph is obtained, we apply Algorithm \ref{alg:posettrack} and proceeds in two main stages,
which are sub-stages of Stage 3.  In Stage 3A, we add edges in order of least weight to greatest weight, but no edges are added that would cause a vertex to exceed an indegree or outdegree of 1.  Once this is complete, the resulting reduced graph is disconnected, and each connected component is a chain.  Therefore, Stage 3B extracts chains by merely iterating over the set of vertices with indegree 0.

The precise methodology for assigning weights is a driver of overall tracker performance, and needs to respect the underlying dynamics of the individual timelines.  It is necessary to tailor the weighting procedure to the particular timeline behaviors, though our tracking algorithm is only sensitive to the ordering of edges of $TP_{C}(\col{U})$ in terms of their respective weights.

Since weighting is an important performance driver, one could ask if weighting \emph{alone} is sufficient for good tracker performance, in essence testing whether the tracklet poset is necessary at all.  The easiest way to do this is to replace the tracklet poset with a complete graph, so that every detection is reachable from every other.  For the air traffic example, this is simply computationally infeasible, so starting with the chains of the tracklet poset at least provides a significant computational advantage.  For the sonar example it is possible to perform the desired experiment.  In that case (the sonar dataset run with the tracklet poset disabled), the results were so poor as to not be worthy of further note.

We applied two separate weighting methodologies:
\begin{enumerate}
\item A ``simple'' weight, which is given by the spatial distance (horizontal and vertical) between observations, and
\item A ``tailored'' weight, which is the sum of six normalized functions between observations:
\begin{enumerate}
\item Horizontal distance,
\item Vertical distance,
\item Time difference,
\item Heading difference,
\item Speed difference, and
\item a Kinematic difference based on the forward projected states using a Kalman filter. 
\end{enumerate}
\end{enumerate}

Unlike the Ford-Fulkerson algorithm \cite{Ford_1956}, in which edges can be reused up to a finite capacity, our algorithm limits the number of times \emph{vertices} can be reused.  Specifically, each vertex has indegree and outdegree each at most 1.  Recalling that vertices are observations of a timeline in $TP_{C}(\col{U})$, this means that we assume that timelines cannot intersect.   

\begin{algorithm}
\caption{Poset tracking algorithm}
\label{alg:posettrack}
\begin{algorithmic}
\Function{PosetTrack}{$dets$,$E$}
\Require{$dets$ is the list of observations}
\Require{Tracklet poset graph given its list of edges $E\subseteq dets\times dets$}
\Require{The edges in $E$ are listed in order of increasing weight}

\State \Comment{{\bf Stage 3A: Reduce the tracklet poset graph}}
\State Set $F \gets \{\}$ \Comment{List of edges forming unambiguous minimal-weight chains}
\State Initialize a boolean list $srcDets$ of length $dets$ to all False
\State Initialize a boolean list $endDets$ of length $dets$ to all False

\ForAll{$e=(p_1,p_2) \in E$}
 \If{not $srcDets[p_1]$ and not $endDets[p_2]$}
  \State Append $e$ to $F$
  \State $srcDets[p_1] \gets True$
  \State $endDets[p_2] \gets True$
 \EndIf
\EndFor

\State \Comment{{\bf Stage 3B: Extract tracks from the chains of the reduced graph}}
\State Set $tracks \gets \{\}$ \Comment{The set of chains selected as tracks}

\ForAll{$d$ in $dets$}
 \If{$srcDets[d]$ and not $endDets[d]$}
  \Comment{This detection starts a track}
  \State Append $FollowTrack(d,F,\{\})$ to $tracks$
 \EndIf
\EndFor
\State \Return tracks
\EndFunction

\Function{FollowTrack}{$d$,$F$,$track$}
\ForAll{$e=(p_1,p_2)\in F$}
 \If{$p_1==d$}
  \State Set $newtrack$ to be the concatenation of $track$ and $e$
  \State Append $FollowTrack(p_2,F,newtrack)$ to $track$
 \EndIf
\EndFor
\State \Return $track$ appended with $d$
\EndFunction
\end{algorithmic}
\end{algorithm}

\section{Results}
\label{sec:results}

We deployed Algorithm \ref{alg:posettrack} and a tracker based on a standard Kalman filter against two datasets to test the applicability of the sample rate bounds in Section \ref{sec:sample_rate} under reasonable constraints on search regions and to compare the performance of the algorithm against a baseline approach.  We did not tailor the Kalman filter beyond optimal selection of initial covariance matrices, as the goal was merely to provide a meaningful comparison and not a comparison against the state of the art.

Since a single Kalman filter does not associate detections from multiple objects into tracks,
we wrote a simple track associator that connects each detection to its nearest neighbor in state and covariance,
provided one exists within a given fixed window.  

Specifically, we tested the algorithm against 
\begin{enumerate}
\item An commercial air traffic monitoring dataset: in which there is relatively low clutter, few occlusions, but many targets executing complex behaviors.  This dataset also was independently tagged with target identities, which allowed for direct measurement of sample rate bounds.
\item An indoor sonar dataset: in which there is high noise, high clutter, and many occlusions.  Targets consisted of specular points of reflection, which were not independently identified.
\end{enumerate}

In order to baseline our expectations for tracker performance, we compared performance against a tracker based on a standard Kalman-filter that incorporates position and velocity estimates into its state space.  We tuned the observational and initial state covariances of the Kalman filter for optimal performance.  We recognize that this is \emph{not} a state-of-the-art Kalman-based tracker---performance improvements are certainly possible---but merely that it is \emph{well-understood} and therefore a good baseline.

\subsection{Monitoring air traffic}
\label{sec:airtraffic}

Positions and identities of each aircraft in the volume in our dataset were reported through the Automatic Dependent Surveillance-Broadcast (ADS-B) system.  The ADS-B system is a beacon system by which aircraft automatically report their positions at regular intervals using a radio frequency (RF) transmitter.  The ability to maintain track on many aircraft is dependent on their voluntary transmission of this information.  Each aircraft tracks its own position using GPS, and sends information on identification, position in latitude and longitude, altitude, and velocity via data link.  Receivers on the ground capture this information and aggregate it to form a large scale picture of aircraft locations.

ADS-B data has several useful characteristics for testing tracker algorithms.  These include (1) flight paths that are consistent with actual flight operations, (2) realistic update rates that are adequate to maintain track custody, (3) intermittent drop-outs from extended ranges, terrain shadowing, and anomalous propagation, and (4) imperfect knowledge of position for each detection due to the RF propagation environment.  However, (1) is also a limitation, since by using ADS-B data one is limited to the actual flight profiles and flight density of commercial air traffic. 

For this study, ADS-B data were retrieved from the PlanePlotter \cite{PlanePlotter} service.  To improve the data quality from the raw ADS-B detections, those tracks that were especially sparse were manually discarded.  In those cases where two detection articles showed that an aircraft maintained a constant course and speed from one point to the other, without maneuvering, an additional point was added in between.

Sixteen (16) subsets of the original 40 minute dataset were produced.  These were parameterized by a target density and a time subsample factor, so each dataset is identified as ``$m\_n$'' in which $m$ is the density factor and $n$ is the subsample factor, where $m$ and $n$ range from 1 to 4.  The density factor indicates that every $m$-th target from the original dataset is present in the dataset, while the time subsample factor indicates that every $n$-th time sample is present in the dataset.  Thus, the dataset $1\_1$ is the original dataset, while $4\_4$ was the sparsest.

\begin{figure}
  \begin{center}
    \includegraphics[width=2.75in]{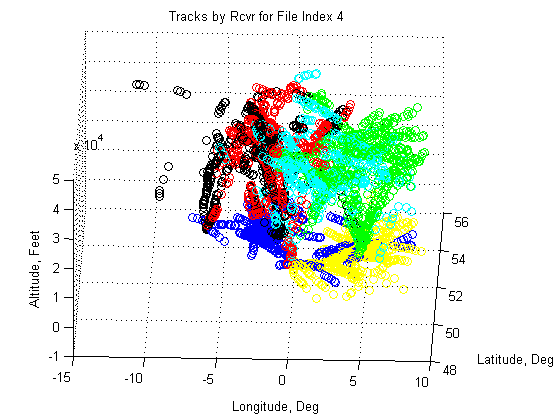}
    \caption{A view of the air traffic data with no subsampling, colored by target identity.}
    \label{fig:airdata_summary}
  \end{center}
\end{figure}

The goal of the tracking algorithms is to produce tracks in which all ADS-B returns (\emph{detections}) from a given target ID are present in a single track, but no detections from any other target ID appear in that track.  Deviations from this ideal situation are quite common, and can be analyzed statistically via several measures of track quality.

In order to test the performance of Algorithm \ref{alg:posettrack} as applied to aircraft tracks, we fed the position and time information into the Algorithm but removed the self-reported aircraft IDs.  The Algorithm produced tracks with replacement IDs, which we compared against the self-reported IDs.

\subsubsection{Designing the constraint function}
\label{sec:designing_constraint}

\begin{figure}
  \begin{center}
    \includegraphics[width=2.75in]{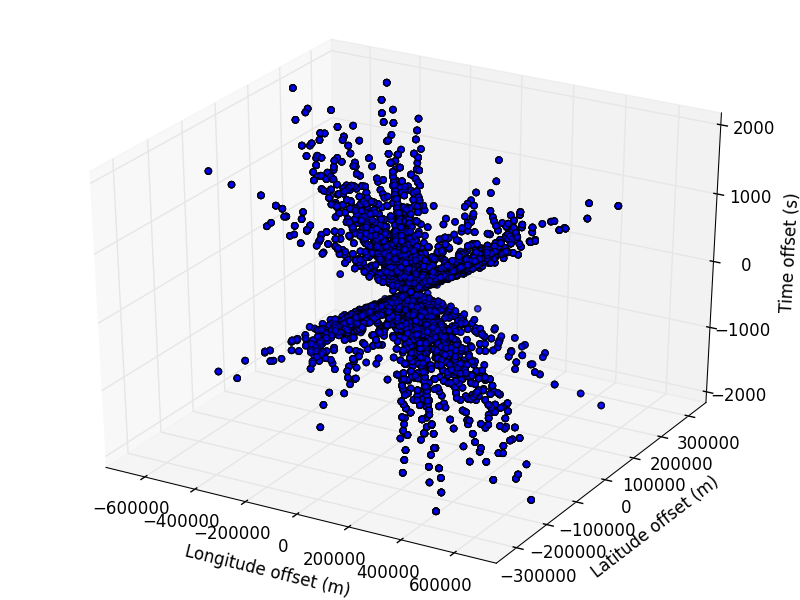}
    \caption{Distribution of previous and subsequent detections centered on a given detection, aggregated over all targets in the density 4, subsample 4 dataset.  The vertical axis of the plot is time offset, while the horizontal axes are latitude and longitude.  Vertical displacement is not shown.}
    \label{fig:airdata_neighborhood}
  \end{center}
\end{figure}

Figure \ref{fig:airdata_neighborhood} shows a visualization we used to construct the constraint function, which consists of a point cloud centered on the purported location and time of a given detection.  Each point shown in the Figure is a future (or past) displacement-time offset pair for a target in our dataset.  Specifically, suppose that $\dotsc, (x_{-1},t_{-1}), (x_{0},t_{0}), (x_{1},t_{1}),\dotsc$ is a sequence of location-time pairs for a given target (all of the same identity).  Then the set of points given by 
\begin{equation}
\label{eq:targets_centered}
\bigcup_{k} \bigcup_{j} (x_j - x_k, t_j - t_k)
\end{equation}
lists all possible target displacement-time offset pairs for that target throughout its timeline.  Figure \ref{fig:airdata_neighborhood} shows this set aggregated over all targets in the lowest density dataset (target density 4 with every one sample used out of each 4 samples taken).  This set should be contained in the $C(U)$ for any open set $U$ containing the origin, which provides a minimum size for $C(U)$.  

Based on visual examination of Figure \ref{fig:airdata_neighborhood}, we realized that target motion is essentially latitude/longitude translation invariant.  The starting latitude/longitude of the target does not appear to bias its motion particularly strongly.  

Although vertical motion (not shown) does stratify into different behaviors---takeoff, landing, cruising---this did not seem to be highly correlated with the latitude/longitude of the target.  There is a clear horizontal speed cutoff, evident in the conical gap in the middle of the Figure.  This corresponds to a cutoff of just over 300 m/s, which is the maximum speed we observed.  Most targets have subsequent samples within 500 km and 300 s of the original sample.  In what follows, we used a constraint function given by 
\begin{eqnarray*}
C(U) &=& \left\{(x,t) \in \mathbb{R}^3\times\mathbb{R} : \text{there is an } (y,s) \in U\text{ such that}\right.\\
&& 0 \le t-s < 300\,\text{s}\\
&& \sqrt{(x_1-y_1)^2+(x_2-y_2)^2}<500\,\text{km},\\
&& |x_3-y_3|<500\,\text{m, and}\\
&&\left.0 \le \frac{\sqrt{(x_1-y_1)^2+(x_2-y_2)^2}}{t-s} < 300\,\text{m/s} \right\}.
\end{eqnarray*}

A measure of the correctness of this choice of constraint function is how well it retains custody of a single target.  According to Definition \ref{def:loss_custody}, \emph{loss of custody} occurs when a detection of a target exits the search region for that target.  For instance, if a helicopter's flight path is present in the data, it would satisfy the velocity constraint.  But only helicopters were used to build $C$, then few of the aircraft would satisfy that constraint.  This can be estimated easily as the percentage of points in the set given in \eqref{eq:targets_centered} that are not contained in $C(U)$ for some open neighborhood $U$ of the origin.  Using $C$ as the constraint function, the mean loss of custody rate is 3.7\% over all targets in the full dataset with all targets (density 1 subsample 1).  If we remove the time gate---permitting $t-s \ge 300$ above---then this value decreases slightly to 2.6\%, which indicates that a small number of targets are sampled extremely infrequently.  As shown in Figure \ref{fig:losscustody}, the loss of custody is fairly stable over all datasets, so we conclude that the constraint function defined above is correct.

\begin{figure}
  \begin{center}
    \includegraphics[width=2.5in]{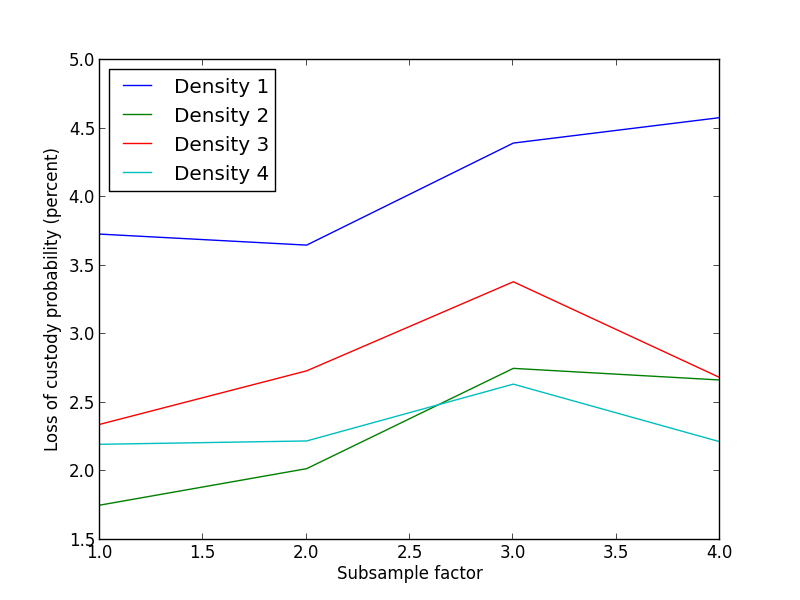}
    \caption{Loss of custody for the datasets as a function of subsample rate}
    \label{fig:losscustody}
  \end{center}
\end{figure}

\subsubsection{Sample rates for disambiguation}

\begin{figure}
  \begin{center}
    \includegraphics[width=2.5in]{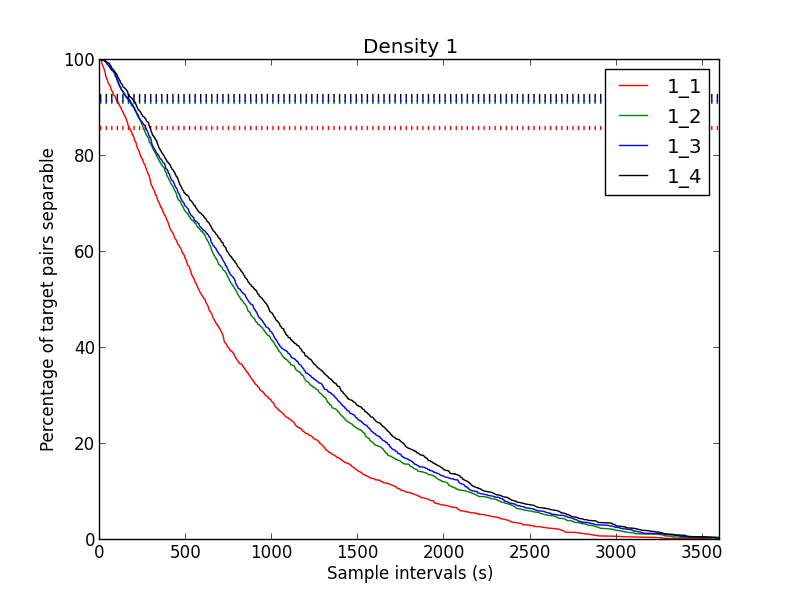}
    \caption{Percentage of target pairs that can be separated at a given sample interval.  The horizontal axis specifies the sampling interval (seconds), while the vertical axis specifies the percentage of target pairs that can be separated at that sampling interval.  The horizontal dotted lines indicate the actual percentage of targets that are separable in the data.}
    \label{fig:samplerates}
  \end{center}
\end{figure}

Proposition \ref{prop:timeline_separation} requires estimation of the diameter of $C(U) \cap X \times\{t\}$ for the constraint function $C$ as a function of time.  Since the Proposition requires the minimum distance between timelines---which we do not know, since the data are already sampled---we interpolated linearly between samples.  Using the constraint function $C$, this diameter is bounded according to the maximum target speed: 300 m/s.  Figure \ref{fig:samplerates} shows that approximately 90\% of target pairs are separable using a maximum speed of 300 m/s.  This is indicated by the horizontal dotted lines in the Figure.  This corresponds closely to a sample interval of approximately 300 s.  Therefore, all trackers in our experiment were instructed to drop tracks that have gone stale after 300 s.

\subsubsection{Statistical performance overview}

To compare tracker performance in a statistically-sound and algorithm-agnostic way across all three tracker algorithms (Kalman tracker and Algorithm \ref{alg:posettrack} with two different weighting strategies), we focused first on the overall quality of the tracks produced.  Since the identities of each detection were known, we can produce two measures of track quality:
\begin{enumerate}
\item The number of tracks associated to a given target identity (``tracks per target'', a measure of how frequently tracks are dropped) and
\item The number of different target identities represented in a given track (``targets per track'', a measure of track identity uncertainty). 
\end{enumerate}
Ideally, both measures should be as close to 1 as possible.  We note that the linear combination in the tailored weight can be adjusted to get a balance between targets per track and tracks per target performance and could easily be adjusted to improve each separately at the expense of the other.

\begin{figure*}
  \begin{center}
    \includegraphics[width=5in]{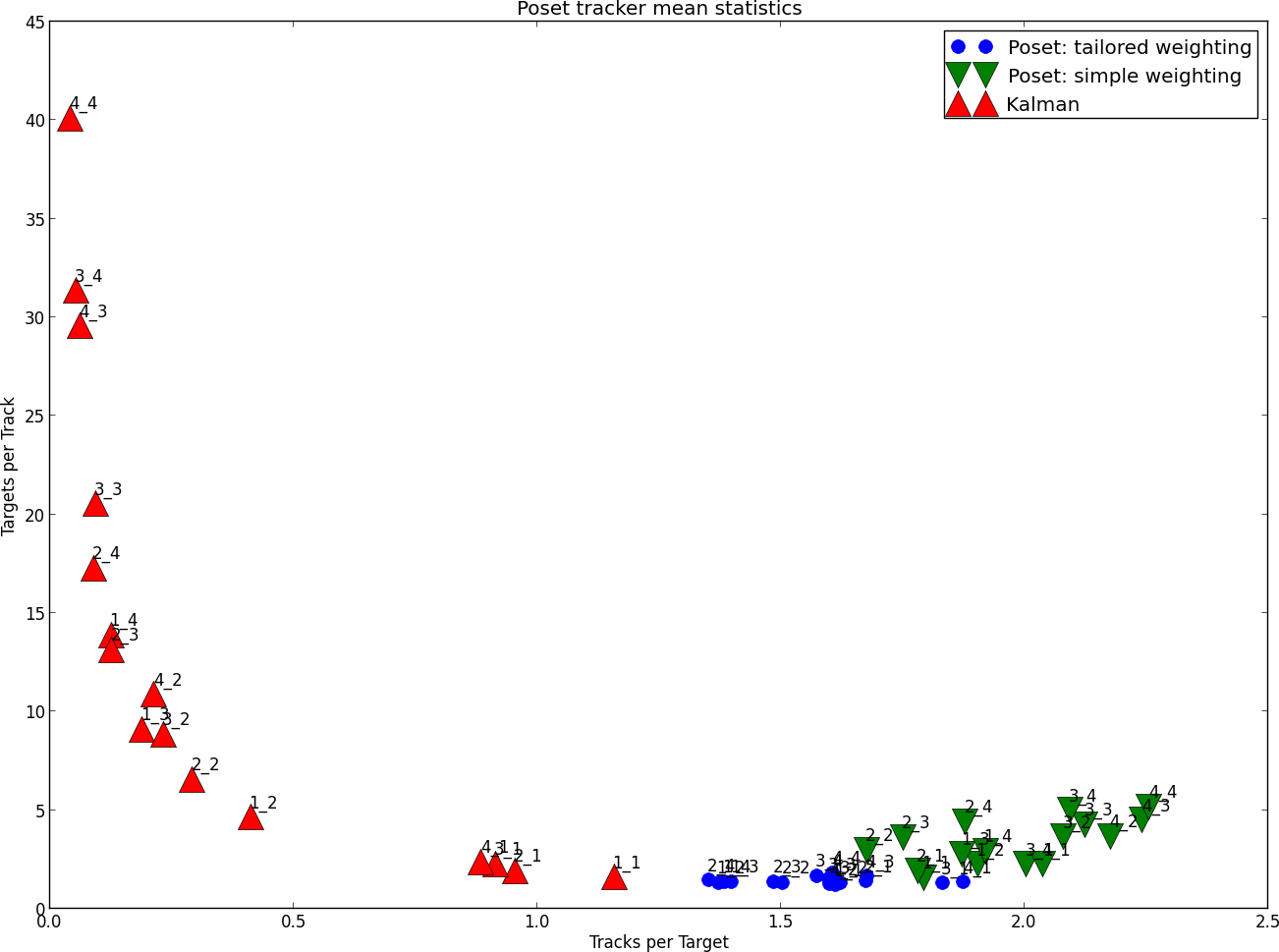}
    \caption{Tracker performance for different datasets showing targets per track versus tracks per target.  Each marker is labeled with an ordered pair $A$\_$B$ in which $A$ is the density of the tracks and $B$ is the subsampling factor.  Red triangles correspond to the Kalman tracker, green triangles correspond to the poset tracker with simple neighborhoods, and blue circles correspond to the poset tracker with refined neighborhoods.}
    \label{fig:overall_plots}
  \end{center}
\end{figure*}

As shown in Figure \ref{fig:overall_plots}, both weighting methodologies yield an improvement over a standard Kalman filter tracker, though the tailored weight yields a much more substantial improvement.  The general behavior of track quality as the density and sampling rates change are as we would expect, with the ``Density 1 Sample 1'' (i.e. ``1\_1'') dataset having better performance and the ``4\_4'' dataset the worst performance.  There is a trend of degrading performance as the densities increase, a rough monotonicity to ``best'' balanced performance from lower to higher sampling rates (i.e. ``1\_1'' to ``1\_4'').  In the poset trackers, this shifts from better targets per track (ambiguity) to better tracks per target (continuity) as the sampling rate gets higher. 

Figure \ref{fig:overall_plots} shows that the Kalman tracker has difficulty initiating tracks since its typical number of tracks per target is below 1.  Worse, when the Kalman tracker initiates a track it tends to mix identities, especially in the case of undersampled, high density targets.  The poset-based trackers tend to result in substantially more disconnected, tracks with unambiguous identities.  The next two subsections examine target length and ambiguity more closely, since Figure \ref{fig:overall_plots} shows \emph{mean} performance only.  A distributional analysis is helpful in clarifying these performance estimates.

\subsubsection{Track length analysis}

The ``tracks per target'' measure is fairly coarse---a tracker that produces one long track with a few short ones for a single target is indistinguishable from one that produces the same number of medium-length tracks.  An easy way to rectify this is to examine the typical length of a track, measured against the actual length of the target's track.  Again, since we have the identities of the targets, it is easy to compute the ratio of the length of a track versus the number of observations.  This is shown in Figure \ref{fig:lenratio} for the three trackers under discussion.  (To reduce clutter in the plot, we used only the longest track for each target.)  

\begin{figure*}
  \begin{center}
    \includegraphics[width=6in]{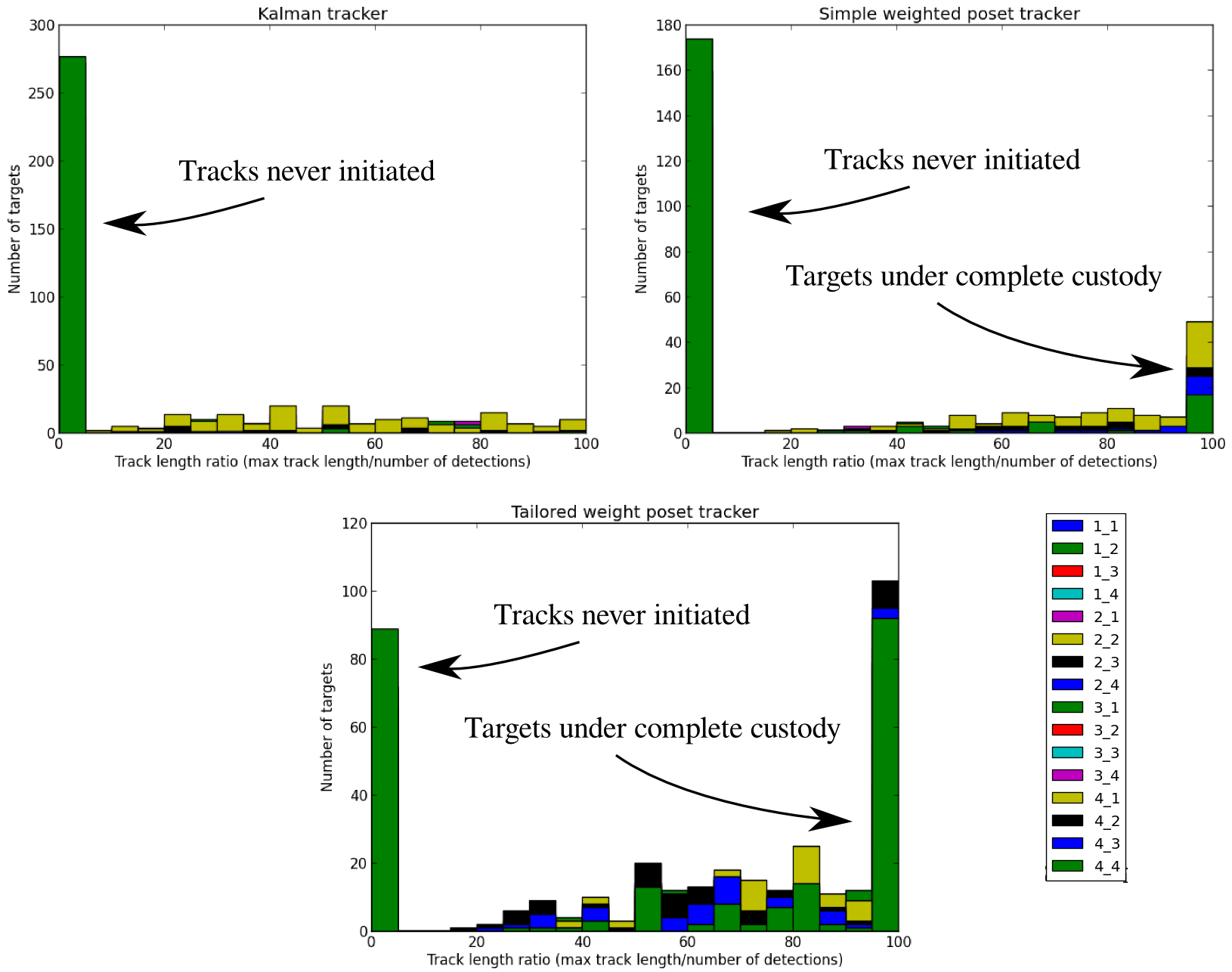}
    \caption{Histograms of ratios of track length: estimated versus actual; colors indicate different datasets}
    \label{fig:lenratio}
  \end{center}
\end{figure*}

The leftmost bar represents those tracks that are only one detection long---those tracks that were never really followed.  Each tracker we studied fails to initiate some tracks, though both poset trackers do a substantially better job of initiating tracks than the Kalman filter.  The rightmost bar represents tracks that cover the entire extent of the observations for a given target---targets for which custody was never lost.  The Kalman filter rarely produces tracks of this sort, while both poset trackers were able to retain custody on a large proportion of the targets.  

The tracker using the tailored weight permits about twice as many tracks to be initiated and maintains complete custody on approximately twice as many targets over the simple weight according to the right and bottom frames of Figure \ref{fig:lenratio}.  In addition to these clear-cut cases, it also picks up more challenging targets for which it cannot maintain full custody, as evidenced by the increase over the middle of the plots.

\subsubsection{Misidentification analysis}

Similar to the track length distributions shown above, it is possible to refine the ``targets per track'' measure by analyzing the proportion of target identities that are incorrect in a given track.    

\begin{figure*}
  \begin{center}
    \includegraphics[width=6in]{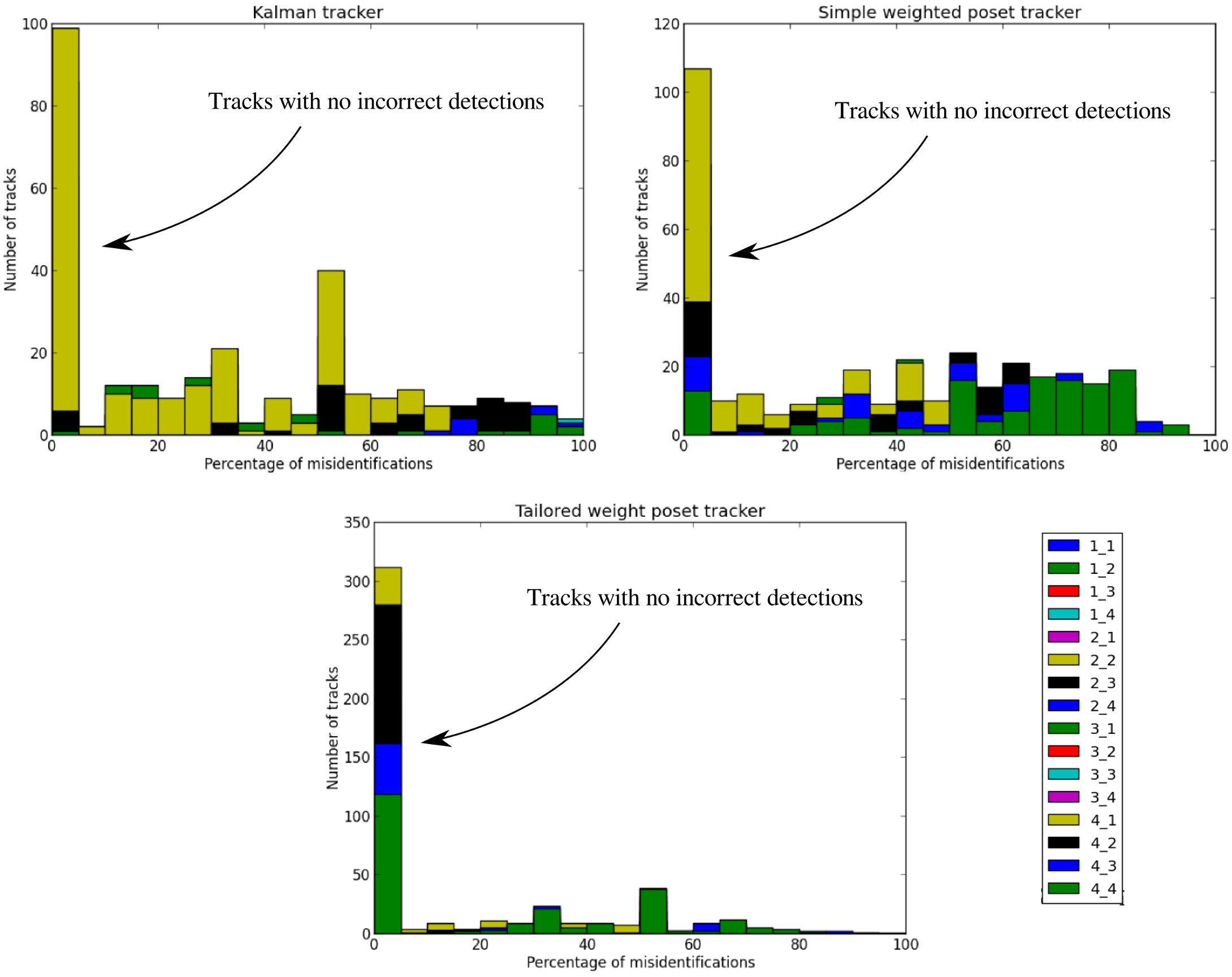}
    \caption{Histogram of proportion of incorrect target identities in a given track; colors indicate different datasets}
    \label{fig:misid}
  \end{center}
\end{figure*}

Figure \ref{fig:misid} shows the performance of all three trackers in this case.  A given track tends to be generally more ambiguous with the Kalman tracker than the poset tracker with tailored weight, but this is highly dependent on having a good weighting.  In the Figure, the leftmost bar consists of unambiguous tracks, those that contain only one target identity.  Both poset trackers do better than the Kalman tracker.  Over the rest of the plot, the simple weighting appears to be roughly similar in performance (possibly slightly worse) than the Kalman filter.

\subsection{Indoor sonar}

The sonar data contains no true identity information, and has relatively high clutter.  The sonar transmitter consisted of a pair of inexpensive 10 W speakers connected to a laptop computer, while the same laptop recorded echos from a wired portable microphone such as those used in lecture halls.  The recording sample rate was 44.1 kHz and the pulse repetition rate was 29.4 Hz (1500 samples per pulse).  Each individual pulse consisted of an impulse whose transmitted bandwidth was around 8--10 kHz.  The collection captured around 200 pulses.

We collected our data in the Jacobs Fitness Center on the American University campus.  The general setup is shown in Figure \ref{fig:sonarsetup}.  All scatterers in the room were held fixed, while the microphone was slowly dragged along the floor toward the laptop by pulling the wire connecting the microphone to the laptop.  In this way, the range of prominent scatterers in the the room drifted, giving the appearance of many moving targets.

\begin{figure}
  \begin{center}
    \includegraphics[width=3.5in]{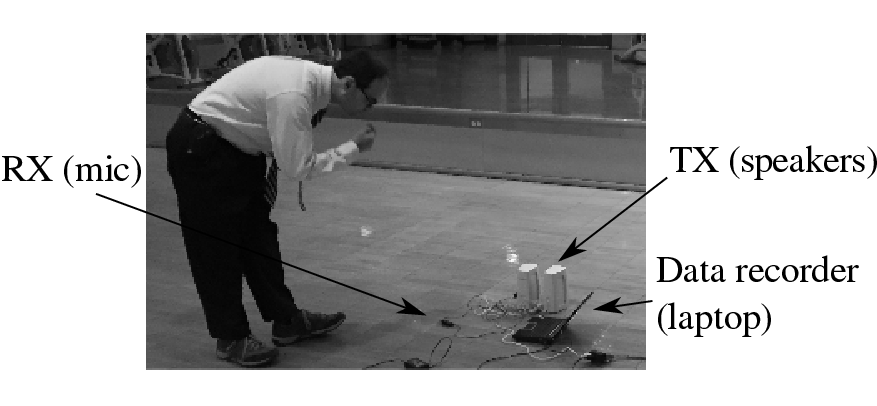}
    \caption{Experimental setup for the sonar data collection}
    \label{fig:sonarsetup}
  \end{center}
\end{figure}

After the data were collected, the pulses were aligned so that the strongest echo in each pulse interval occurred at the first sample.  Echos were identified using a standard constant false alarm rate detector with a window of 100 samples with a 10 sample guard.  When a connected set of observations occurred, only the centroid of this connected set was recorded as a detection.  The centroid was computed using the received signal strength as the density.  Large connected sets of observations occurred frequently, but were especially prominent for strong echos.  The resulting set of sonar detections and tracks is shown in Figure \ref{fig:sonar_tracks_07b}.

Since the sonar echos after preprocessing are indistinguishable, we cannot verify that sufficient sample rates (according to Propositions \ref{prop:timeline_separation} or \ref{prop:search_region_size}) were obtained.  We also cannot obtain a model like Figure \ref{fig:airdata_neighborhood}, so the maximum speed was estimated by hand from the apparent slope of the chain of observations in the middle of the collection.  This was found to be about 4 range cells per pulse.  The apparent timelines of various ``targets'' cross both in the lower left portion of Figure \ref{fig:sonar_tracks_07b} and around pulse 75 at the top of the Figure.  This means that the sample rate bound in Proposition \ref{prop:timeline_separation} is certainly not satisfied, as the appropriate sampling interval would be zero!

The set of detections was then fed into the same multi-target Kalman tracker algorithm as in Section \ref{sec:airtraffic} for comparison with Algorithm \ref{alg:posettrack}, though with different parameters.  Specifically, covariances were tuned for optimal track quality and stale tracks were dropped after 50 pulses.  The output of this algorithm appears in the top frame of Figure \ref{fig:sonar_tracks_07b}.  It is immediately clear that considerable ambiguity exists, especially in close range (near the origin).

\begin{figure}
  \begin{center}
    \includegraphics[width=4in]{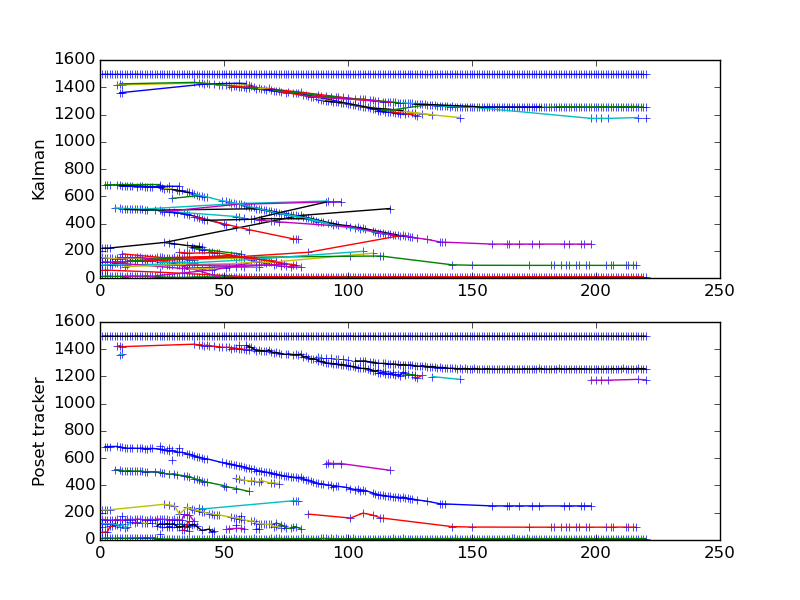}
    \caption{Tracks produced by a standard multi-target Kalman filter (top) and Algorithm \ref{alg:posettrack} (bottom).  The horizontal axis represents pulse number (slow time) while the vertical axis represents sample number (bistatic range)}
    \label{fig:sonar_tracks_07b}
  \end{center}
\end{figure}

To apply Algorithm \ref{alg:posettrack}, we require the design of a constraint function.
The constraint function employed a cutoff time cutoff after 50 pulses, a displacement cutoff of 75 range cells, and a maximum speed of 50 range cells per pulse (which was well above the maximum observed: 4 range cells per pulse).  The weights were generated solely from the spatial distance (in range) between observations.
The output of Algorithm \ref{alg:posettrack} is shown in the bottom frame of Figure \ref{fig:sonar_tracks_07b}.  

Using Algorithm \ref{alg:posettrack}, there is considerably less confusion than with the Kalman filter about which targets go with which tracks.  Not all ambiguity has been removed, though, most notably the cyan track.

\section{Conclusions and Future research}
\label{sec:conclusions}

This article presents a practical, first-principles approach to multi-object tracking that supports feature-poor targets obeying set-valued dynamical laws.  We proved performance guarantees based on the sampling rate of the tracker, and demonstrated the feasibility of an algorithm based on our theory on laboratory and fielded datasets.

Both of the sample interval bounds---for search region size and loss of custody---are upper bounds, which future work could attempt to improve.  It is likely that constraint functions can be designed from the transition probability density function and could tighten these bounds, though this may be computationally quite challenging.  Additional improvements can also be made to the weighting, especially given that we found that the tracker performance depends strongly on the weighting.

\section*{Acknowledgments}
The authors would like to thank Prof. Robert Ghrist for his guidance throughout the project, and especially for his feedback during the preparation of this article.  We also thank David Lipsky for algorithmic discussions and assistance with interpreting the results.

\bibliographystyle{plain}
\bibliography{posettrack_bib}

\begin{IEEEbiography}
[{\includegraphics[width=1in]{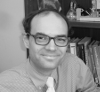}}]
{Michael Robinson} is a professor of mathematics and statistics at American University. He is interested in signal processing, dynamics, and applications of topology.  His current projects involve topological approaches to tracking, communication network analysis, sonar target recognition, and data fusion. 
\end{IEEEbiography}

\begin{IEEEbiography}
[{\includegraphics[width=1in]{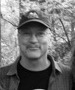}}]
{Michael Stein} is the co-founder of Tesseract Analytics, LLC, a small company that develops advanced analytics for government and commercial clients.  His interests include developing algorithms that apply topological models for solving hard problems in tracking, social network analysis, and text understanding.  Recent projects have included topological approaches to tracking, extraction of concepts and trends from news stories and analyst reports, and automated support for sharing and visualization of political, social, and economic data about countries.
\end{IEEEbiography}

\begin{IEEEbiography}
[{\includegraphics[width=1in]{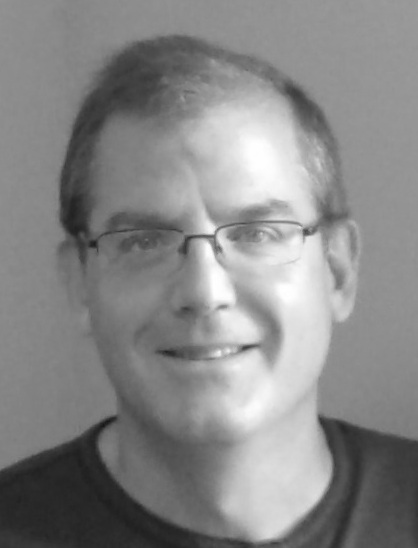}}]
{Henry Owen} is the owner of two technology companies, HS Owen LLC and Slingshot Analytics, Inc.  His companies  focus on various aspects of radio frequency (RF) propagation, sensing, communication, and navigation.  His current projects include RF propagation inversion and tracking and geolocation problems using novel data sources.
\end{IEEEbiography}

\end{document}